\documentclass[a4paper,12pt]{amsart}
\usepackage{amsthm, amsfonts, amssymb, mathtools, color}
\usepackage[all]{xy}
\usepackage{fullpage}
\usepackage{bm}
\usepackage{url}
\usepackage{xcolor}
\usepackage{hyperref}
\hypersetup{colorlinks=true,linkcolor=blue!45!black,
  citecolor=blue!45!black,urlcolor=blue!45!black,
  pdftitle={Non-vanishing of the structure-sheaf cohomology of affine henselian schemes and henselian affinoids}}
\def\cA{\mathcal{A}}

\def\cF{\mathcal{F}}
\def\cO{\mathcal{O}}
\def\cU{\mathcal{U}}
\def\cX{\mathcal{X}}

\def\fm{\mathfrak{m}}
\def\fp{\mathfrak{p}}
\def\fq{\mathfrak{q}}

\def\la{\lambda}
\def\La{\Lambda}

\def\ka{\kappa}
\def\til#1{\widetilde{#1}}
\def\ovl#1{\overline{#1}}
\def\what#1{\widehat{#1}}

\DeclareMathOperator{\Spec}{Spec}

\DeclareMathOperator{\Sph}{Sph}
\DeclareMathOperator{\Frac}{Frac}

\DeclareMathOperator{\Aut}{Aut}

\DeclareMathOperator{\Tr}{Tr}
\DeclareMathOperator{\Jac}{Jac}
\DeclareMathOperator{\HH}{H}
\def\het#1{{}^h\kern-.12em{#1}}
\def\dl{\langle\kern-.22em\langle}
\def\dr{\rangle\kern-.22em\rangle}
\def\dal{\langle\kern-.22em\langle}
\def\dar{\rangle\kern-.22em\rangle}
\def\dbl{[\kern-.12em[}
\def\dbr{]\kern-.12em]}
\def\dpl{(\kern-.2em(}
\def\dpr{)\kern-.2em)}

\def\dast{\ast\kern-.03em\ast}
\def\tast{\ast\kern-.26em\ast\kern-.26em\ast}

\def\longto{\longrightarrow}
\def\longhookrightarrow{\lhook\joinrel\longto}

\def\injto{\hookrightarrow}
\def\injlongto{\longhookrightarrow}
\newcommand{\tagref}[1]{\href{https://stacks.math.columbia.edu/tag/#1}{Tag~#1}}
\theoremstyle{plain}
\newtheorem{thm}[subsection]{Theorem}
\newtheorem{prop}[subsection]{Proposition}
\newtheorem{lem}[subsection]{Lemma}
\newtheorem{cor}[subsection]{Corollary}

\theoremstyle{definition}
\newtheorem{dfn}[subsection]{Definition}

\newtheorem{exa}[subsection]{Example}

\theoremstyle{remark}
\newtheorem{rem}[subsection]{Remark}

\title[Non-vanishing of the structure-sheaf cohomology]
{Non-vanishing of the structure-sheaf cohomology of affine henselian schemes and henselian affinoids}
\author{Yoshinori Gongyo}
\address{Graduate School of Mathematical Sciences, The University of Tokyo, 3-8-1 Komaba, Meguro-ku, Tokyo, 153-8914, Japan}
\email{gongyo@ms.u-tokyo.ac.jp}

\author{Fumiharu Kato}
\address{Faculty of Social Informatics, ZEN University, 3-12-11 Shinjuku, Zushi-shi, Kanagawa, 249-0007, Japan}
\email{fumiharu\_kato@zen.ac.jp}

\author{Takayuki Uchiba}
\address{sugakubunka Co., Ltd., 7-4-4 Nishi-Shinjuku, Shinjuku-ku, Tokyo 160-0023, Japan}
\email{takayuki.uchiba@sugakubunka.com}

\date{\today}
\subjclass[2020]{Primary 14B20; Secondary 13J15, 13B40, 14F06}
\keywords{henselian scheme, coherent cohomology, mixed characteristic,
  Noether normalization, trace}

\begin{document}

\begin{abstract}
Let $(A,I)$ be a henselian pair such that $A$ is a domain
of characteristic zero and $I\ne0$. We prove that any flat affine
henselian scheme of finite presentation over $(A,I)$ has
nonzero first structure-sheaf cohomology whenever one of its
fibers over a point in $V(I)$ has positive dimension.
This in particular gives a negative answer to the question by Devadas in the mixed-characteristic case. As an application, we also prove that the first structure-sheaf cohomology does not vanish for henselian affinoids of positive dimensions.
\end{abstract}

\maketitle


\section{Introduction}\label{sec:intro}

In \cite[Theorem~1.12]{greco1981quasi}, Greco and Strano asserted an analogue of Theorem B for quasi-coherent
sheaves on affine henselian schemes. Later, de Jong constructed a
characteristic-zero counterexample to this theorem \cite{deJong2018TheoremB}, which is further reproduced and developed
by Devadas in \cite[Proposition~3.1.15 and Example~3.1.16]{devadas2026henselian}.
In positive characteristic, by contrast, higher cohomology vanishes
for the henselian pullback of any quasi-coherent sheaf on an affine
scheme \cite[Theorem~3.2.6]{devadas2026henselian}. 
De Jong also proved non-vanishing on the henselian projective line over
a complete discrete valuation ring of characteristic zero \cite{dejong2019laurent,de2019coh}; see also
\cite[Example~4.4.1]{devadas2025gagahenselianschemes}. That result includes mixed characteristic cases. Devadas asks whether the Theorem B for affine henselian schemes might still
hold in the mixed characteristic case \cite[Question~1.1]{devadas2026henselian}.

Our main questions are as follows: Are affine Henselian schemes (in characteristic zero) such that their $\HH^1(\cO)$ do not vanish special, or are they ubiquitous? What about Henselian affinoids (in Henselian rigid geometry)?

The purpose of this paper is to clarify the answers to these questions and to Devadas’s question. Our main results are the following. 

\begin{thm}\label{thm:main1}
Let $(A,I)$ be a henselian pair with $A$ an integral domain of characteristic zero, and $I\ne0$. Let $(B,IB)$ be an $A$-flat henselian algebra of henselian finite presentation.
Suppose that there exists a point $\fp\in V(I)$ such that
\[
  \dim\bigl(B\otimes_A\ka(\fp)\bigr)>0.
\]
Then, for $X=\Sph(B,IB)$, we have
\[
  \HH^1(X,\cO_X)\ne0.
\]
\end{thm}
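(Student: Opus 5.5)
The plan is to reduce to a model situation, the henselian affine line over a one-dimensional base, and to transport non-vanishing back along finite morphisms using a trace splitting, which is available because we are in characteristic zero.

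\emph{Step 1 (localize the base).} Fix $\fp\in V(I)$ with $d:=\dim(B\otimes_A\ka(\fp))>0$. I would first replace $(A,I)$ by a convenient henselian pair $(A',I')$ with $A'$ a characteristic-zero domain, $I'\neq0$, and a map $A\to A'$ sending $\fp$ into the closed fiber. The first candidate is the henselization of $A$ at $\fp$; a further candidate is a henselian rank-one valuation ring dominating it. Flatness of $B$ and the fiber dimension are preserved. The point to check is the direction of transfer: for flat base change, $\HH^1(X,\cO_X)\otimes_A A'\to\HH^1(X',\cO_{X'})$ should be an isomorphism, via Čech complexes for a finite cover by principal henselian opens. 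Vanishing for $X$ would then force vanishing for $X'$, so it suffices to prove non-vanishing for $X'$. If $A\to A'$ is not flat, I would restrict to the henselization, which is flat, and carry out the remaining steps with $A$ local and henselian along $\fp$.

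\emph{Step 2 (henselian Noether normalization and trace).} Shrinking $X$ to a principal henselian open around a point of dimension $d$ in the special fiber, I would find a finite morphism $\pi\colon X\to Y:=\Sph(A\{T_1,\dots,T_d\}^h,I)$ that is generically finite flat of some degree $n$. Shrinking is harmless once one knows that $\HH^1$ of $X$ surjects onto $\HH^1$ of a principal open after a suitable choice. Otherwise I would run the argument with the open itself, choosing the open so that the Čech class constructed in Step 3 lives there. Since $\pi$ is finite, $\HH^1(X,\cO_X)=\HH^1(Y,\pi_*\cO_X)$. In characteristic zero, $\tfrac1n\Tr$ gives an $\cO_Y$-linear retraction $\pi_*\cO_X\to\cO_Y$ of the structure map; to make $\Tr$ land in $\cO_Y$ I would first replace $Y$ by its normalization or restrict to the normal locus. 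Hence $\HH^1(Y,\cO_Y)$ is a direct summand of $\HH^1(X,\cO_X)$. Projecting $Y$ further to the first coordinate, and using that $\cO$ of the $(d-1)$ remaining henselian directions is faithfully flat with a section, reduces everything to $d=1$.

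\emph{Step 3 (the henselian line; main obstacle).} It remains to show $\HH^1\neq0$ for $Y=\Sph(A[T]^h_I)$. Choose $0\neq a\in I$ and cover $Y$ by the two principal henselian opens $D(T)$ and $D(1-aT)$; they cover because $T$ and $1-aT$ generate the unit ideal modulo $I$ on $V(I)$. The Čech group is
\[
  C:=\cO(D(T(1-aT)))\big/\bigl(\cO(D(T))+\cO(D(1-aT))\bigr),
\]
and I would exhibit a nonzero class in the spirit of de Jong's example. The natural candidate comes from $\log(1-aT)$, a series that is algebraic over the henselization only after dividing by suitable powers of $T$. The choice I would try first is a specific algebraic function on the overlap, such as a root of a separable polynomial obtained through the henselian property, whose $T$-Laurent and $(1-aT)$-expansions cannot be split.

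To prove non-splitting, I would map to a completed or valued situation, for instance $\what A$ or the dominating valuation ring from Step 1. There I would compare coefficients of $T^{-k}$, using that in characteristic zero the denominators $1/k$ grow in an incompatible way. This is exactly where characteristic zero enters; in characteristic $p$ the Artin--Schreier phenomenon produces the splitting that yields vanishing. I expect this explicit algebraicity-versus-growth argument to be the hardest step, and I would first try to model it on de Jong's computation for the henselian $\mathbb P^1$.
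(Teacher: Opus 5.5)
Your architecture matches the paper's: reduce to the henselian line, inject $\HH^1$ of a Noether-normalization base into $\HH^1(X,\cO_X)$ by a characteristic-zero trace, and drop to one variable by a retraction. But the decisive step is missing and, as set up, cannot work.

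\textbf{The cover in Step 3 is degenerate.} Since $a\in I$, the element $1-aT$ lies in $1+IA\{T\}$, and these are units because a henselian pair is Zariskian. So $D(1-aT)\cap V(I)$ is all of $Y$, $D(T(1-aT))=D(T)$, and your group $C$ is identically zero. You need two opens whose complements in the special fibre are nonempty and disjoint, such as $D(T)$ and $D(T-1)$ (Definition~\ref{dfn:class}).

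\textbf{No class or non-splitting argument is supplied.} The $\log(1-aT)$ and coefficient-growth heuristic is not a proof. The paper works over a complete DVR $R$ with uniformizer $\pi$. It takes $f=1+2w$ with $w^2+w=a/(T(T-1))$ and $w\equiv0$ modulo $\pi$. Then $f^2=(T-\alpha)(T-\beta)/(T(T-1))$ with $\alpha\in\pi R$ and $\beta=1-\alpha$, i.e.\ one simple branch point in each removed residue disc. Suppose $f=u+v$ with $u\in E_T=(R[T,1/T])^h$ and $v\in E_{T-1}=(R[T,1/(T-1)])^h$. The ring $E_{T-1}$ embeds into $R\dbl T\dbr$, and after substituting $T=\alpha+z$ the function $f^2$ has $z$-adic valuation $1$. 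Hence $f\notin F(v)$, where $F=\Frac(R)(T)$, and symmetrically $f\notin F(u)$. This contradicts Lemma~\ref{lem:involutions}: in characteristic zero, $f=u+v$ with $f^2\in F$ forces $f\in F(u)$ or $f\in F(v)$, since otherwise the product of the two involutions has infinite order ($u\mapsto u-2nf$).

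\textbf{Steps 1 and 2 transfer non-vanishing in the wrong direction.} A base change $X'\to X$, or the inclusion of an open, only gives $\HH^1(X,\cO_X)\to\HH^1(X',\cO_{X'})$. So $\HH^1(X')\ne0$ says nothing about $X$ unless your nonzero class is known to come from $X$.
\begin{itemize}
\item The flat base-change isomorphism you invoke is not available. Henselization does not commute with non-integral base change such as $A\to A^h_{\fp}$.
\item A fixed finite principal cover does not compute $\HH^1$ of a henselian affine scheme, because its members can themselves have nonzero $\HH^1$. That is exactly the phenomenon at issue.
\item The same objection applies to shrinking $X$ to an open.
\end{itemize}
The paper avoids all of this by defining the class $\sigma_{a,b}$ on $X$ itself. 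Here $a\in I$ and $b$ is a member of a normalization tuple for $B\otimes_A\ka(\fp)$, so no shrinking is needed, and the class is functorial by uniqueness of the Hensel root. It then suffices to show that its image in $\HH^1$ of $\Sph(B\otimes_A\what{R})^h$ is nonzero, where $R$ is a DVR in $\Frac(A)$ dominating $A_{\fp}$. There the closed fibre is a base change of the fibre over $\fp$. So $\what{R}\{t_1,\dots,t_d\}\to(B\otimes_A\what{R})^h$ is finite and injective (Proposition~\ref{prop:background1}\,(1) and Lemma~\ref{lem:gluingOfDomainness}). Its source is a normal domain, so the trace lands where it should, and no flatness of $A\to\what{R}$ is needed.

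\textbf{Smaller points.}
\begin{itemize}
\item $\frac1n\Tr$ needs $n$ invertible, which fails over, say, $\mathbf{Z}_p$. Use torsion-freeness of $\HH^1$ (Lemma~\ref{lem:torsion}) to get injectivity rather than a direct summand.
\item Only the low-degree Leray injection into $\HH^1(X,\cO_X)$ is needed, not an equality.
\item Your outline does not address the non-Noetherian case. There neither the trace lemma nor a dominating DVR is directly available. The paper handles it by approximating with Noetherian henselian domains and using $\HH^1(\Sph B,\cO)\simeq\varinjlim_{\la}\HH^1(\Sph B_{\la},\cO)$.
\end{itemize}
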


\begin{cor}\label{cor:general}
Let $(A,I)$ be a henselian pair, and let $(B,IB)$ be an $A$-flat henselian algebra of henselian finite presentation. Suppose that there are prime ideals $P\subseteq \fp$ of $A$ such that
\begin{equation*}\label{equ:hypotheses}
I\subseteq \fp,\quad I\nsubseteq P,\quad \operatorname{char}(A/P)=0,\quad\dim(B\otimes_A\ka(\fp))>0.
\end{equation*}
Then, for $X=\Sph(B,IB)$,
\[
\HH^1(X,\cO_X)\neq0.
\]
\end{cor}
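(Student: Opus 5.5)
We deduce Corollary~\ref{cor:general} from Theorem~\ref{thm:main1} by base change to $A/P$. Set $\bar A = A/P$, $\bar I = I\bar A$, and $\bar B = B\otimes_A \bar A = B/PB$.

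\textbf{Step 1: the hypotheses of Theorem~\ref{thm:main1} hold for $\bar A$.} We check them one at a time.
\begin{itemize}
\item A quotient of a henselian pair is henselian, so $(\bar A,\bar I)$ is a henselian pair.
\item $\bar A$ is a domain of characteristic zero, since $\operatorname{char}(A/P)=0$.
\item $\bar I\neq 0$, since $I\nsubseteq P$.
\item $\bar B$ is $\bar A$-flat, since flatness is stable under base change.
\item $\bar B$ is of henselian finite presentation over $(\bar A,\bar I)$. Write $B$ as the henselization of $A[x_1,\dots,x_n]/(f_1,\dots,f_m)$ along $I$. Henselization commutes with base change along the finite map $A\to\bar A$, so $\bar B$ is the henselization of $\bar A[x]/(\bar f)$ along $\bar I$.
\item $\bar B$ is again $\bar I$-adically henselian, being a quotient of the henselian pair $(B,IB)$.
\end{itemize}
For the fiber condition, let $\bar\fp=\fp/P\in V(\bar I)$. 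Then $\ka(\bar\fp)=\ka(\fp)$ and $\bar B\otimes_{\bar A}\ka(\bar\fp)=B\otimes_A\ka(\fp)$, which has positive dimension by hypothesis. Theorem~\ref{thm:main1} therefore gives $\HH^1(\bar X,\cO_{\bar X})\neq0$ for $\bar X=\Sph(\bar B,\bar I\bar B)$.

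\textbf{Step 2: transfer the non-vanishing to $X$.} The space $\bar X$ is the closed henselian subscheme of $X$ cut out by $PB$. Let $i:\bar X\to X$ be the closed immersion. Then $i_*\cO_{\bar X}=\cO_X/P\cO_X$ is a quotient of $\cO_X$ by the coherent ideal $P\cO_X$, and $\HH^1(\bar X,\cO_{\bar X})=\HH^1(X,\cO_X/P\cO_X)$ because $i_*$ is exact. From
\[
0\to P\cO_X\to\cO_X\to\cO_X/P\cO_X\to0
\]
we get the exact sequence
\[
\HH^1(X,\cO_X)\to\HH^1(X,\cO_X/P\cO_X)\to\HH^2(X,P\cO_X).
\]
So it suffices to show $\HH^2(X,P\cO_X)=0$, or find another route.

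\textbf{Main obstacle: controlling the $\HH^2$ term.} The problem is that higher cohomology on affine henselian schemes need not vanish, which is the whole point of the paper. I would try two approaches.
\begin{itemize}
\item Bound the cohomological dimension. The underlying space of $X$ is that of $\Spec(B/IB)$, with $X=\Spec(B/IB)$ topologically. That space is affine, and I would try to show that for coherent sheaves cohomology is computed by a Čech complex on a cover by two basic opens. This would give vanishing in degree $\ge2$ only in special situations, so it is not convincing in general.
\item Work directly in degree one, which I prefer. Write $P=(a_1,\dots,a_r)$ if $A$ is noetherian, or handle a general $P$ by a limit argument. Filter $\cO_X\to\cO_X/P\cO_X$ and reduce to showing that $\HH^1(X,\cO_X)\to\HH^1(\bar X,\cO_{\bar X})$ is surjective via Čech cocycles. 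For a cover $X=\bigcup D(g_j)$, a Čech $1$-cocycle with values in $\bar B_{g_jg_k}^h$ lifts termwise to $B_{g_jg_k}^h$, because henselization commutes with quotients. Cocycle conditions are automatic in degree one for a two-set cover.
\end{itemize}
The crucial point is therefore to show that $\HH^1$ of an affine henselian scheme can be computed by the Čech complex of a finite cover by henselian basic opens; a two-element cover then makes surjectivity immediate. If that fails, I would instead reprove Theorem~\ref{thm:main1} so that its nonzero class is explicitly constructed, for instance via a Noether normalization and trace argument, and then lift that specific class from $\bar X$ to $X$, checking that it remains non-trivial.
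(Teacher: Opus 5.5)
\textbf{Step 1 is fine; Step 2 has a genuine gap.} Your Step~1 (passing to $\bar A=A/P$, $\bar B=B/PB$ and checking the hypotheses of Theorem~\ref{thm:main1}) is correct and is exactly the paper's reduction. The gap is in Step~2.

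Your preferred route is to prove that $\HH^1(X,\cO_X)\to\HH^1(\bar X,\cO_{\bar X})$ is surjective by lifting \v{C}ech cocycles termwise. This does not work as stated, for two reasons.
\begin{itemize}
\item A general class in $\HH^1(\bar X,\cO_{\bar X})$ need not be representable on a two-element cover. On a cover with three or more members, the termwise lift satisfies the cocycle identity only modulo $P$. The obstruction is precisely a \v{C}ech $2$-cocycle with values in $P\cO_X$, i.e.\ the $\HH^2(X,P\cO_X)$ term you were trying to avoid.
\item Even for a two-element cover $\{D(\bar g_1),D(\bar g_2)\}$ of $\bar X$, the lifts $D(g_1),D(g_2)$ need not cover $X$. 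This is because $|\bar X|=V((I+P)B)$ is only a closed subset of $|X|=\Spec(B/IB)$.
\end{itemize}
So surjectivity is not established by your argument, and it is not needed. Your instinct is right that Theorem~\ref{thm:main1} used as a black box (mere non-vanishing on $\bar X$) is insufficient.

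\textbf{The missing idea.} It is the one you list as a fallback, and it requires no reproving. The proof of Theorem~\ref{thm:main1} already produces an explicit nonzero class $\sigma_{a',b'}\in\HH^1(\bar X,\cO_{\bar X})$ of the form in Definition~\ref{dfn:class}, with $a'\in I\bar A$ and $b'$ a member of a normalization tuple. Lift $a'$ to $a\in I$ and $b'$ to $b\in B$.
\begin{itemize}
\item Since $b-(b-1)=1$, the sets $D(b)$ and $D(b-1)$ always cover $X$.
\item The Hensel root $w\equiv 0\pmod{IB}$ of $w^2+w=a/(b(b-1))$ is unique, so $f_{a,b}$ maps to $f_{a',b'}$.
\item Hence $\HH^1(X,\cO_X)\to\HH^1(\bar X,\cO_{\bar X})$ sends $\sigma_{a,b}$ to $\sigma_{a',b'}\neq 0$, and so $\sigma_{a,b}\neq 0$.
\end{itemize}
Non-triviality of the lift is automatic, and no control of $\HH^2$ is required. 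Only one specific class needs to lift, and the always-covering two-element cover $\{D(b),D(b-1)\}$ is exactly what makes that lift work.
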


Note that the first theorem gives a negative answer to \cite[Question~1.1]{devadas2026henselian}.

As for the proof, proving the first theorem is the essential part.
Let us outline the proof of Theorem \ref{thm:main1}.

\begin{dfn}\label{dfn:class}
In the situation as in Corollary \ref{cor:general}, each pair $(a,b)\in I\times B$ defines the \v{C}ech cohomology class in $\check{\HH}^1(\cU,\cO_X)$, where $\cU$ is the open covering $\cU=\{D(b),D(b-1)\}$ of $X=\Sph B$, induced from the section $f_{a,b}$ defined by
\begin{equation}\label{eq:intro-class}
f_{a,b}=1+2w,\quad
w^2+w=\frac{a}{b(b-1)},\quad w\equiv0\pmod{IB}.
\end{equation}
We denote by $\sigma_{a,b}\in\HH^1(X,\cO_X)$ the associated cohomology class in $\HH^1(X,\cO_X)$. 
\end{dfn}

Note that the section $w$ (and hence $f_{a,b}$ as well) is uniquely determined by the pair $(a,b)$ due to Hensel's lemma, and thus we have the following compatibility of $\sigma_{a,b}$: If 
$$
\xymatrix{(B,IB)\ar[r]&(B',IB')\\ (A,I)\ar[u]\ar[r]&(A',IA')\ar[u]}
$$
is a commutative diagram of henselian pairs, and $(a,b)$ is mapped to $(a',b')\in A'\times B'$, then the induced map 
$$
\HH^1(X,\cO_X)\longto\HH^1(X',\cO_{X'})
$$
(where $X'=\Sph(B',IB')$) of cohomologies maps $\sigma_{a,b}$ to $\sigma_{a',b'}$.

Now the proof of Theorem \ref{thm:main1} boils down to showing that there exists a pair $(a,b)\in I\times B$ such that $\sigma_{a,b}\neq 0$ in $\HH^1(X,\cO_X)$. 
The proof consists of the following three steps.

\subsection*{Step 1 (\S\ref{sec:line})}
We first consider the affine line case over a complete DVR $A=R$ with $I=\pi R$, where $\pi$ is a uniformizer, i.e., $B=(R[t])^h$ is the $\pi$-adic henselization of the polynomial ring over $R$. In this case, the core of the argument lies in showing that, for any non-zero $a\in I$, the cohomology class $\sigma_{a,t}$ is non-zero. This part of the proof is one of the most technical parts of our proof.

\subsection*{Step 2 (\S\ref{sec:generalNoetherian})}
Next, we consider the case where $A$ is a Noetherian $I$-adically henselian ring. In this case, we first choose a map $A\to R$ to a DVR in the fractional field of $A$ that dominates $\fp$, and consider the henselization $B_{\what{R}}=(B\otimes_A\what{R})^h$ of the base change to the completion of $R$. The key concept is what we call the {\em normalization tuple}, which is a tuple of elements in $B$ that gives Noether normalization of $B\otimes_A\ka(\fp)$. We will show that the map $\what{R}\{t\}\to B_{\what{R}}$ determined by the image of one of the members of the tuple, say $b$, induces the injective $\HH^1(\Sph\what{R}\{t\},\cO)\to\HH^1(\Sph B_{\what{R}},\cO)$. Since the non-zero image of $\sigma_{a,t}$ (as in Step 1) in $\HH^1(\Sph B_{\what{R}},\cO)$ coincides with the image of $\sigma_{a,b}$ of $\HH^1(\Sph B,\cO)$, we have $\sigma_{a,b}\neq 0$.

\subsection*{Step 3 (\S\ref{sec:general})}
We employ the usual technique to eliminate the Noetherian hypothesis. 
We take a filtered direct system $\{(A_{\la},I_{\la})\}_{\la\in\La}$ of henselian pairs with each $A_{\la}$ a Noetherian integral domain of characteristic zero that converges to $(A,I)$.
After restricting to a cofinal directed subset if necessary, this extends to a filtered direct system $\{(A_{\la},I_{\la})\to (B_{\la},I_{\la}B_{\la})\}_{\la\in\La}$ of flat and henselian finitely presented henselian algebras with the properties that satisfy the premises of the theorem.
The core part of the argument lies in the construction of the compatible system of normalization tuples of $B_{\la}$. As in Step 2, this gives rise to the compatible system of non-zero cohomology classes $\{\sigma_{a,b_{\la}}\in\HH^1(\Sph B_{\la},\cO)\}_{\la\in\La}$, defining a non-zero element in $\HH^1(\Sph B,\cO)$, as desired.

\medskip
Notice that our argument for the proof is independent from de Jong's example. 

After completing the proofs of Theorem \ref{thm:main1} and Corollary \ref{cor:general}, \S\ref{sec:exa} discusses interesting counterexamples that arise when we remove the assumptions of the theorem one by one.

In the course of proving the main theorem, we observe that the non-zero cohomology classes concentrate outside the ideal of definition (e.g., Corollary \ref{cor:line-torsion}). This leads us expect that the non-vanishing property will hold for associated Henselian affinoids as well. In fact, in the final section \S\ref{sec:affinoids}, we will prove the following theorem regarding this line (see \cite{kato2017henselianrigid} for basic notation in henselian rigid geometry):

\begin{thm}[Non-vanishing for positive-dimensional henselian affinoids]\label{thm:general-henselian-affinoid-nonvanishing}
Let $V$ be an $a$-adically henselian height-one valuation ring of characteristic zero, and $K=\Frac(V)$.
Let $\cA$ be a henselian affinoid $K$-algebra with $\dim\cA>0$, and let $\cX$ be its associated affinoid. Then
\[
  \HH^1(\langle\cX\rangle,\cO_{\cX})\neq 0.
\]
\end{thm}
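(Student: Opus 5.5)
We reduce Theorem~\ref{thm:general-henselian-affinoid-nonvanishing} to the non-vanishing statements for affine henselian schemes (Theorem~\ref{thm:main1}, or rather the finer information obtained in its proof), and then pass to the generic fiber. By definition, the henselian affinoid $K$-algebra $\cA$ has the form $\cA=B\otimes_V K$, where $B$ is a $V$-flat henselian algebra of henselian finite presentation over the henselian pair $(V,aV)$; this is a formal model in the sense of \cite{kato2017henselianrigid}. We may replace $B$ by its quotient by $a$-power torsion, so that $B$ is flat. The associated space satisfies $\langle\cX\rangle=\Sph(B,aB)\setminus V(aB)$, the complement of the closed fiber, and $\cO_{\cX}$ is the restriction of $\cO_X$ with $X=\Sph(B,aB)$. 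Since $\dim\cA>0$, flatness together with the usual dimension theory of henselian models over height-one valuation rings (fibre dimension equals generic dimension) gives $\dim(B\otimes_V\ka(\fm_V))>0$. Hence the hypotheses of Theorem~\ref{thm:main1} hold with $A=V$ and $I=aV$.

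The key point is to track the specific classes. The proof of Theorem~\ref{thm:main1} produces a pair $(a',b)$ with $a'\in aV\setminus\{0\}$ and $b$ a member of a normalization tuple such that $\sigma_{a',b}\neq0$. We then show that the restriction $\HH^1(X,\cO_X)\to\HH^1(\langle\cX\rangle,\cO_{\cX})$ does not kill a suitable class $\sigma_{a^n,b}$. Cohomology on $\langle\cX\rangle$ is computed as $\varinjlim_n$ of cohomology on the opens $D(a)$, or equivalently as $\HH^1(X,\cO_X)\otimes_VK$ after inverting $a$. Flatness and quasi-compactness of the henselian scheme give $\HH^1(\langle\cX\rangle,\cO)=\HH^1(X,\cO_X)[1/a]$, and this is the comparison statement we will need to check carefully using the \v{C}ech description with the cover $\{D(b),D(b-1)\}$. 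It therefore suffices to find a class in $\HH^1(X,\cO_X)$ that is not $a$-power torsion. This is exactly the phenomenon recorded in Corollary~\ref{cor:line-torsion}: nonzero classes "concentrate outside the ideal of definition."

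So the main step is to prove non-torsionness: for all $n$, $a^n\sigma_{a',b}\neq0$. We reduce this to the affine line over $\what R$ (here $V$ itself, or its completion) via the injectivity of $\HH^1(\Sph\what R\{t\},\cO)\to\HH^1(\Sph B_{\what R},\cO)$ established in Step~2/3. Injectivity preserves non-torsionness, so everything comes down to the line. There we use the explicit analysis of Step~1 (Corollary~\ref{cor:line-torsion}) to show that the classes $\sigma_{c,t}$, $c\in aV\setminus\{0\}$, generate a non-torsion submodule. One natural route is to observe that $a^n\sigma_{a',t}$ and $\sigma_{a^na',t}$ are related, since $w$ depends analytically on $c$ with leading term $c/(t(t-1))$. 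If instead $a^n\sigma_{a',t}=0$, the \v{C}ech cocycle $a^n(1+2w)$ would split as a difference of sections over $D(t)$ and $D(t-1)$. Comparing Laurent-type expansions at $t=0$ and $t=1$, as in Step~1, we expect this to contradict the non-splitting proven there.

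The main obstacle is this non-torsion claim, together with making the comparison $\HH^1(\langle\cX\rangle)=\HH^1(X)[1/a]$ rigorous when $V$ is non-Noetherian. For the latter, we would use the limit argument of Step~3: writing $V$ as a filtered colimit of Noetherian henselian domains is compatible with localization at $a$.
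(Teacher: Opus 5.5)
There is a genuine gap, and it lies in the step you treat as a formality: the comparison with $\langle\cX\rangle$. The Zariski--Riemann space is not $\Sph(B,aB)\setminus V(aB)$, and $\cO_{\cX}$ is not a restriction of $\cO_X$. The underlying space of $X=\Sph(B,aB)$ is $\Spec(B/aB)=V(aB)$, so that complement is empty, and so is $D(a)\cap|X|$. In fact $\langle\cX\rangle$ is the inverse limit of the admissible blow-ups $h\colon Y\to X$. It maps to $X$ by a specialization map $s$, and $\cO_{\cX}=\bigl(\varinjlim_Y p_Y^{-1}\cO_Y\bigr)[1/a]$. So your claim that ``flatness and quasi-compactness give $\HH^1(\langle\cX\rangle,\cO)=\HH^1(X,\cO_X)[1/a]$'' has no argument behind it. As an equality it is also more than you can get: the Leray sequence only gives $0\to\HH^1(X,s_*\cO_{\cX})\to\HH^1(\langle\cX\rangle,\cO_{\cX})\to\HH^0(X,R^1s_*\cO_{\cX})$, and the last term is not controlled. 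Fortunately only the injection is needed.

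The missing input is the computation of sections over the preimage of a distinguished open $U=\Sph(C,aC)$:
\[
\Gamma(s^{-1}U,\cO_{\cX})\simeq\varinjlim_Y\Gamma(h^{-1}U,\cO_Y)[1/a]\simeq C[1/a].
\]
This uses that sections commute with limits of spectral spaces, and, crucially, that admissible blow-ups do not change sections after inverting $a$ \cite[Proposition~3.3]{kato2017henselianrigid}. It gives $s_*\cO_{\cX}\simeq\cO_X[1/a]$, then $\HH^1(X,\cO_X[1/a])\simeq\HH^1(X,\cO_X)[1/a]$ by quasi-compactness, and the Leray injection finishes the argument, as in Proposition~\ref{prop:rigidification-h1-injective}. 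Once this identification of sections is available, your \v{C}ech idea also works. By Lemma~\ref{lem:Cech} for the cover $\{s^{-1}D(b),s^{-1}D(b-1)\}$, a splitting of $f_{a',b}$ over $\langle\cX\rangle$ would, after clearing denominators, split $a^nf_{a',b}$ over $X$.

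Conversely, what you call the main obstacle is immediate. Since $B$ is $V$-flat, $a$ is a non-zero-divisor on $B$. Lemma~\ref{lem:torsion}, or even Proposition~\ref{prop:dvr-cohomology} (which makes $a$ act bijectively on $\HH^1(X,\cO_X)$), shows that no nonzero class is $a$-power torsion and that $\HH^1(X,\cO_X)[1/a]=\HH^1(X,\cO_X)$. So you need not return to the line, relate $a^n\sigma_{a',t}$ to $\sigma_{a^na',t}$, or compare Laurent expansions; your sketch of that route also ends in ``we expect'', so it would not have been a proof anyway.

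Two smaller points. First, replacing $B$ by its $a$-torsion-free quotient preserves henselian finite presentation only because the $a$-saturated ideal is still finitely generated. Over a non-Noetherian $V$ this is nontrivial and should be cited from \cite[\S 3.1]{kato2017henselianrigid}. Second, ``fibre dimension equals generic dimension'' is not an available black box in this setting. The paper instead observes that $\sqrt{aV}=\fm_V$ makes $B$ hfp over the henselian pair $(V,\fm_V)$. A zero-dimensional closed fibre would then force $B$ to be finite over $V$ by Proposition~\ref{prop:background1}\,(1), hence $\cA$ finite over $K$, contradicting $\dim\cA>0$.
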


\subsection*{Convention}
\begin{itemize}
\item All rings are assumed to be commutative with $1$, and all ring homomorphisms are assumed to send $1$ to $1$.
\item For a local ring $A$, $\fm_A$ denotes the maximal ideal of $A$.
\item For a ring $A$ and a prime ideal $\fp$, the residue field at $\fp$ is denoted by $\ka(\fp)$.
\item The sheaf $\cO$ in the notation of cohomologies $\HH^q(X,\cO)$ means the structure sheaf of the first entry $X$.
\item By a principal open subset $D(f)$ of $X=\Sph(A,I)$ by $f\in A$, we mean the usual scheme-theoretic $D(f)$ in $\Spec A$ restricted on the underlying topological space $V(I)$ of $X$.
\end{itemize}

\section{henselian pairs and henselian affine schemes}\label{sec:henselianAffineSchemes}
\subsection*{2A. henselian pairs}\label{sub:henselianPairs}

A {\em pair} always refers to a couple $(A,I)$ of a commutative ring $A$ and an ideal $I$ of it. An {\em ideal of definition} of a pair $(A,I)$ is an ideal $J\subset A$ such that the $J$-adic topology on $A$ coincides with the $I$-adic one, or equivalently, $I^m\subset J^n\subset I$ for some $n,m > 0 $. A {\em morphism} of pairs $f\colon (A,I)\to (B,J)$ is a ring homomorphism $f\colon A\to B$ such that $I^nB\subset J$ for some $n > 0$, i.e., $f$ is continuous with respect to the $I$-adic topology on $A$ and the $J$-adic topology on $B$. A morphism $f\colon (A,I)\to (B,J)$ is said to be {\em adic} if $IB$ is an ideal of definition of $(B,J)$. 

For the notion of {\em Zariskian} and {\em henselian} pairs, we refer to, e.g., \cite[Chap.~0, \S 7.3]{fujiwara_kato_2018}. Note that a henselian pair $(A,I)$ is Zariskian, i.e., $I$ is contained in the Jacobson radical $\Jac(A)$ of $A$. Note also that a complete pair is Henselian.

The following lemma is well-known; cf.\ \cite[Chap.~XI, \S 2, Prop.~2]{Raynaud1970}.

\begin{lem}\label{lem:henselianProperties}
{\rm (1)} If $(A,I)$ is a henselian pair, and $B$ is an integral $A$-algebra, then $(B, IB)$ is henselian.

{\rm (2)} If $\{(A_{\la},I_{\la})\}_{\la\in\La}$ is a filtered inductive system of henselian pairs, then $(A,I)=\varinjlim_{\la\in\La}(A_{\la},I_{\la})$ is henselian.
\end{lem}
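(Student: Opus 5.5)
We use the characterization of henselian pairs by lifting roots: $(A,I)$ is henselian if and only if $I\subseteq\Jac(A)$ and, for every monic polynomial $f\in A[T]$ whose reduction $\bar f$ modulo $I$ has a factorization $\bar f=\bar g\bar h$ into coprime monic polynomials, this factorization lifts to $f=gh$ with $g,h$ monic (see \cite[Chap.~0, \S 7.3]{fujiwara_kato_2018}). An equivalent form that we will use: every monic $f\in A[T]$ whose reduction modulo $I$ has a simple root in $A/I$ has a root in $A$ lifting it. The plan is to check both conditions, the Zariskian condition and the lifting of factorizations, in each of the two situations.

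\emph{For (2).} Put $A=\varinjlim A_{\la}$, $I=\varinjlim I_{\la}$, so that $A/I=\varinjlim A_{\la}/I_{\la}$.
\begin{enumerate}
\item \emph{Zariskian condition.} Take $x\in I$ and pick $x_{\la}\in I_{\la}$ mapping to $x$. Since $I_{\la}\subseteq\Jac(A_{\la})$, the element $1+x_{\la}$ is a unit in $A_{\la}$, so $1+x$ is a unit in $A$. As $x\in I$ was arbitrary, $I\subseteq\Jac(A)$.
\item \emph{Lifting factorizations.} The data of a monic $f$ and a coprime factorization $\bar f=\bar g\bar h$ is finite. Coprimality is witnessed by an identity $\bar u\bar g+\bar v\bar h=1$, which also involves only finitely many coefficients. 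So all of this descends to some index $\la$, possibly after enlarging $\la$ so that the relations hold in $A_{\la}/I_{\la}$.
\item \emph{Conclusion.} Lift the factorization in the henselian pair $(A_{\la},I_{\la})$ and push it forward to $A$.
\end{enumerate}

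\emph{For (1).} Let $B$ be integral over $A$.
\begin{enumerate}
\item \emph{Zariskian condition.} Every maximal ideal of $B$ contracts to a maximal ideal of $A$, by lying-over and going-up for integral extensions (no injectivity is needed, via $A/\ker\to B$). Hence $IB\subseteq\Jac(B)$.
\item \emph{Reduction to the finite case.} Write $B$ as the filtered union of its finite $A$-subalgebras $B_i$. Each pair $(B_i,IB_i)$ has colimit $(B,IB)$, so by (2) it suffices to treat $B$ finite over $A$. Then reduce further to $B$ finitely presented as a module, or argue directly with the root-lifting criterion.
\item \emph{Finite case, reformulated.} For finite $B$, a monic $f\in B[T]$ with a simple root modulo $IB$ defines $C=B[T]/(f)$, which is finite over $A$. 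The simple root gives an idempotent decomposition of $C/IC$ containing a factor isomorphic to $B/IB$. The standard characterization of henselian pairs says that idempotents of $C/IC$ lift to $C$ for every finite (equivalently, integral) $A$-algebra $C$. Lifting this idempotent yields a direct factor $C_1$ of $C$ that is finite over $B$ with $C_1/IC_1\cong B/IB$.
\item \emph{Concluding with Nakayama.} Since $IB\subseteq\Jac(B)$, Nakayama's lemma shows $B\to C_1$ is surjective. Étaleness at the simple root should make it an isomorphism, giving a section $C\to B$, that is, a root of $f$.
\end{enumerate}

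The main obstacle is the final step of (1): showing $C_1\cong B$ rather than merely a quotient of $B$. To handle it, I would use that $C_1$ is locally free of rank one near $V(IB)$, since $f$ has a simple root there and so $C\to B$ is étale. A surjection $B\to C_1$ of finite locally free modules of equal rank, tested on $V(IB)$ (which contains all closed points), is then an isomorphism. As an alternative, one can simply invoke the idempotent-lifting characterization for the integral algebra $B$, which is \cite[Chap.~XI, \S 2, Prop.~2]{Raynaud1970}.
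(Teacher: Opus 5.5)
Your proof is correct, but it does not follow the paper's route: the paper gives no argument and simply cites \cite[Chap.~XI, \S 2, Prop.~2]{Raynaud1970}. Your part (2) is the standard descent of the finitely many data in the factorization criterion, plus the Zariskian check, and it is fine. For (1) you first reduce to $B$ finite over $A$ via (2). You then produce roots of a monic $f\in B[T]$ from the idempotent of $C/IC$, where $C=B[T]/(f)$, attached to a simple root. The payoff of this detour through Nakayama is that you only need idempotent lifting for \emph{finite} $A$-algebras. If instead you grant idempotent lifting for \emph{integral} $A$-algebras, which you assert with ``finite (equivalently, integral)'', then (1) is a one-liner. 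Any finite $B$-algebra $C$ is integral over $A$, so idempotents of $C/(IB)C$ lift to $C$, and this is exactly the idempotent characterization of $(B,IB)$ being henselian. The reduction step and the root argument then become superfluous.

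A few small corrections to your last step. Local freeness of $C_1$ does not come from \'etaleness; moreover the \'etale map would be $B\to C_1$, not $C\to B$. It comes from $C_1$ being a direct factor of $C$, which is free over $B$ because $f$ is monic. With this, you can conclude without any rank argument. The surjection $B\to C_1$ splits, so its kernel $K$ is a direct summand of $B$, i.e.\ $K=eB$ for an idempotent $e$. The isomorphism $B/IB\cong C_1/IC_1=B/(K+IB)$ gives $K\subseteq IB\subseteq\Jac(B)$, so $e=0$. Finally, in the Zariskian check for (1), the fact you actually use is that $A/(\mathfrak{m}\cap A)\subseteq B/\mathfrak{m}$ is an integral extension with $B/\mathfrak{m}$ a field. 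Lying-over and going-up play no role there.
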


There is a {\em henselization} for arbitrary pairs, characterized by a suitable universal mapping property; see \cite[Chap.~XI, \S 2, D\'ef.~4 \& Thm.~2]{Raynaud1970} or \cite[\S 2.8]{kurke1975henselsche}.
We denote by $A^h_I$, or $A^h$, the $I$-adic henselization of $A$, which then forms the henselian pair $(A^h,IA^h)$.

Let $(A,I)$ be henselian pair. We denote by
$$
A\{X_1,\ldots,X_n\}
$$
the $I$-adic henselization of the polynomial ring $A[X_1,\ldots,X_n]$.

\begin{dfn}\label{dfn-henselianfinitetype}{\rm 
We say that an $A$-algebra $B$ is {\em of henselian finite type} (resp.\ {\em of henselian finite presentation} ({\em hfp} for short)) if there exists a surjective $A$-algebra homomorphism
$$
A\{X_1,\ldots,X_n\}\longrightarrow B
$$
for some $n\geq 0$ $($resp.\ with finitely generated kernel$)$.}
\end{dfn}

Note that, due to Lemma \ref{lem:henselianProperties} (1), henselian finite type $A$-algebras are $I$-adically henselian.
We say that a morphism $(A,I)\to (B,J)$ between henselian pairs is of henselian finite type (resp.\ hfp) if it is adic and $B$ is of henselian finite type (resp.\ hfp) over $A$.

\begin{prop}\label{prop:background1}
Let $(A,I)$ be a henselian pair.
\begin{itemize}
\item[{\rm (1)}] If $B$ is hfp over $A$ and $B/IB$ is finite over $A/I$, then $B$ is finite over $A$.
\item[{\rm (2)}] The property hfp is stable under henselian base change. Any morphism over $(A,I)$ between two hfp henselian $A$-algebras is itself hfp.
\end{itemize}
\end{prop}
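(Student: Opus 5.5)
We will prove both parts by reducing to a finite étale presentation of henselizations and to the behavior of finitely generated algebras under henselization. Recall that $A\{X_1,\dots,X_n\}$ is a filtered colimit of étale $A[X]$-algebras $C$ with $A[X]/I\cong C/IC$. Hence an hfp $A$-algebra is the $I$-adic henselization of a finitely presented $A$-algebra $B_0$. To see this, lift the finitely many generators of the kernel to some étale neighbourhood $C$, put $B_0=C/(\text{relations})$, and use that henselization commutes with quotients: $(C/J)^h=C^h/JC^h$. This ``algebraization'' step is the basic tool in both parts.

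For (1), write $B=B_0^h$ with $B_0$ of finite presentation over $A$ and $B_0/IB_0=B/IB$ finite over $A/I$. Then $B_0/IB_0$ is finite over $A/I$, so $\Spec B_0\to\Spec A$ is quasi-finite over $V(I)$. By Zariski's main theorem (in the form \emph{quasi-finite plus finite presentation}), we can factor $\Spec B_0\to \Spec A$ near $V(IB_0)$ as an open immersion into a finite $A$-scheme $\Spec F$, with $F$ finite over $A$; after a standard limit argument we may take $F$ finitely presented. Since $(A,I)$ is henselian and $F$ is finite, $(F,IF)$ is henselian by Lemma~\ref{lem:henselianProperties}(1). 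Moreover $F$ decomposes as a product $F=F_1\times F_2$ according to the clopen decomposition of $V(IF)$ into the part lying in the image of $\Spec B_0$ and its complement; here we use henselianity of $(F,IF)$ to lift idempotents. The open immersion is then an isomorphism near $V(I)$ onto $\Spec F_1$. By the universal property of henselization (an étale map inducing an isomorphism modulo $I$ onto a henselian pair), we get $B=B_0^h\cong F_1$, which is finite over $A$.

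The main obstacle is this ZMT step: producing the factorization only over a neighbourhood of $V(I)$ and checking that the henselization sees only that neighbourhood. I would handle it by first replacing $B_0$ by a localization $B_0[1/s]$ with $s\equiv1\pmod{IB_0}$, which does not change $B_0^h$. I would then invoke the Stacks Project version of ZMT for finitely presented quasi-finite morphisms to get a finite $F$ and an open immersion on such a localization.

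For (2), base change works as follows. If $B=A\{X\}/(g_1,\dots,g_r)$ and $(A,I)\to(A',I')$ is adic with $A'$ henselian, then the henselian base change is $(B\otimes_AA')^h$. Henselization commutes with quotients, and $(A\{X\}\otimes_AA')^h=A'\{X\}$ by the universal property, since both sides corepresent the same functor on henselian $A'$-pairs. So $(B\otimes_A A')^h=A'\{X\}/(g_1,\dots,g_r)$, which is hfp. For a morphism $B\to C$ over $A$ of hfp algebras, write $C=A\{Y\}/(h_j)$. Then $B\{Y\}=(B\otimes_A A\{Y\})^h$, and $C$ is the quotient of $B\{Y\}$ by the images of the $h_j$ together with the relations $x_i-\phi(x_i)$ for the generators $x_i$ of $B$. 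This is finitely many relations, so $C$ is hfp over $B$; adicity is clear since $IB$ generates $IC$.
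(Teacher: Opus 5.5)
Your proof is correct. It follows a genuinely different route from the paper, which gives no argument of its own: it cites \cite[5.1.1]{devadas2026henselian} for (1) and \cite[4.1.5 \& 4.1.7]{devadas2026henselian} for (2). You instead derive both parts from standard facts:
\begin{itemize}
\item henselization is a colimit of étale neighbourhoods and commutes with quotients, which gives $B\cong B_0^h$ with $B_0$ finitely presented;
\item Zariski's main theorem applies to the quasi-finite, finitely presented algebra $B_0[1/s]$;
\item idempotents lift in the henselian pair $(F,IF)$;
\item henselization is unchanged under passage to an étale neighbourhood.
\end{itemize}
Part (2) then follows formally from the universal property of $A\{X\}$. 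The citation buys brevity and defers to a foundational treatment. Your route makes visible what (1) really is: the henselian-pair version of splitting a quasi-finite algebra into a finite factor and a factor with no points over $V(I)$. Your algebraization step is also an elementary, flatness-free cousin of Lemma~\ref{na:approximation}, which needs Devadas's harder flat version.

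A few steps deserve a line each when you write this out.
\begin{itemize}
\item The image of $\Spec(B_0/IB_0)$ in $\Spec(F/IF)$ is closed as well as open. The map is a morphism between finite $A/I$-schemes, hence finite, hence closed. This is what makes the idempotent decomposition $F=F_1\times F_2$ available.
\item The open immersion is not an isomorphism onto $\Spec F_1$. Discard the clopen part of $\Spec B_0[1/s]$ lying over $\Spec F_2$; it has no points over $V(I)$ and vanishes under henselization. The remaining piece is an open subscheme of $\Spec F_1$ containing $V(IF_1)$, so it is an étale neighbourhood of the henselian pair $(F_1,IF_1)$. That is what yields $B\cong F_1$.
\item The existence of $s\equiv1\pmod{IB_0}$ uses openness of the quasi-finite locus. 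Its closed complement is disjoint from $V(IB_0)$.
\item In (2), the relations should be written $x_i-p_i$, with $p_i\in A\{Y\}$ a chosen lift of $\phi(x_i)$. To see that these relations together with the $h_j$ generate the whole kernel, use this fact: two $A$-algebra maps from $B$ into a henselian quotient of $B\{Y\}$ that agree on the $x_i$ coincide, by the universal property of $A\{X\}$.
\end{itemize}
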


\begin{proof}
(1) See \cite[5.1.1]{devadas2026henselian}.

(2) See \cite[4.1.5 \& 4.1.7]{devadas2026henselian}.
\end{proof}

\subsection*{2B. henselian affine schemes}\label{sub:henselianSchemes}

For a henselian pair $(A,I)$, the {\em affine henselian scheme} $\Sph(A,I)$ has underlying topological space $\Spec(A/I)$ and structure sheaf given on principal open subsets by
\[
  \Gamma(D(g)\cap V(I),\cO)=(A_g)^h_{IA_g}.
\]
The sheaf property of the structure sheaf $\cO$ is well-known (cf.\ \cite[Prop.~2.1.2]{devadas2026henselian}).
More generally, one has the notion of the sheaf $\til{M}$ on $\Sph(A,I)$ associated to an $A$-module $M$ with sections 
\[
  \Gamma(D(g)\cap V(I),\til{M})=M\otimes_A(A_g)^h_{IA_g}
\]
on principal open subsets.

\begin{prop}\label{prop:background2}
Let $(A,I)$ be a henselian pair, and let $X=\Sph(A,I)$.
\begin{itemize}
\item[{\rm (1)}] For an $A$-module $M$, $\til{M}$ is a quasi-coherent sheaf on $X$, and we have $\Gamma(X,\til{M})=M$. Moreover, the functor $M\mapsto\til{M}$ from the category of $A$-modules to the category of $\cO_X$-modules is fully faithful and exact.
\item[{\rm (2)}] If $A\to B$ is finite and $\varphi\colon Y=\Sph(B,IB)\to\Sph(A,I)$ is the induced morphism, then $\varphi_{\ast}\cO_Y=\til{B}$ as $\cO_X$-modules.
\end{itemize}
\end{prop}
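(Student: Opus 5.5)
The plan is to reduce everything to two standard facts about henselizations. First, for any $g\in A$ the henselization $(A_g)^h_{IA_g}$ is a filtered colimit of étale $A_g$-algebras, hence flat over $A$. Second, if $(A,I)$ is already henselian then $A^h_I=A$. Throughout, write $A_g^h$ for $(A_g)^h_{IA_g}$.

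\emph{Sheaf property and quasi-coherence in (1).} I would first show that $\til M$ is a sheaf on $X$. Take a principal open $D(g)\cap V(I)$ covered by $D(g_i)\cap V(I)$. We must show that the Čech sequence
\[
0\to M\otimes_A A_g^h\to \prod_i M\otimes_A A_{g_i}^h\to\prod_{i,j}M\otimes_A A_{g_ig_j}^h
\]
is exact. For $M=A$ this is the cited sheaf property of $\cO$. For general $M$ I would reduce to that case. Choose a presentation $A^{(J)}\to A^{(K)}\to M\to 0$. Tensor products commute with direct sums, and with finite products because the covering can be refined to a finite one, as $V(I)\cap D(g)$ is quasi-compact. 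The terms are flat, and a diagram chase then gives left exactness.

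An alternative, cleaner route is descent. The map $A_g^h\to\prod_i A_{g_i}^h$ is faithfully flat: every maximal ideal of $A_g^h$ contains $I$ because the pair is Zariskian, so it lies over a point of some $D(g_i)\cap V(I)$. One then compares the Amitsur complex with the Čech complex. This comparison is the step I expect to be the main obstacle. The difficulty is that $A_{g_i}^h\otimes A_{g_j}^h$ is not literally $A_{g_ig_j}^h$; one only has a map to it that becomes an isomorphism after henselization along $I$. One must check that this discrepancy does not affect the kernels, and I would handle it through the universal property of henselization together with the fact that both rings have the same $I$-adic reduction.

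Quasi-coherence then follows. On $D(g)\cap V(I)=\Sph(A_g^h,IA_g^h)$, the sheaf $\til M$ restricts to $\til{(M\otimes_AA_g^h)}$, using that $(A_g^h)_h^h=A_{gh}^h$. Right exactness of $\otimes$ applied to a presentation of $M$ yields a local presentation $\cO^{(J)}\to\cO^{(K)}\to\til M\to0$. Taking $g=1$ and using $A^h=A$ gives $\Gamma(X,\til M)=M$.

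\emph{Exactness and full faithfulness in (1).} Exactness of $M\mapsto\til M$ follows from flatness of each $A_g^h$ over $A$: exactness holds on sections over every principal open, hence on stalks. For full faithfulness, the map $\mathrm{Hom}_A(M,N)\to\mathrm{Hom}_{\cO_X}(\til M,\til N)$ has a retraction given by taking global sections, which gives injectivity. For surjectivity, let $\varphi\colon\til M\to\til N$ be a morphism and $u=\Gamma(X,\varphi)$. On each principal open, both $\varphi$ and $\til u$ are $A_g^h$-linear and agree on the image of $M$. Since $M\otimes A_g^h$ is generated by that image, they agree everywhere.

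\emph{Part (2).} Let $A\to B$ be finite. The ring $B\otimes_AA_g^h$ is finite over the henselian pair $(A_g^h,IA_g^h)$, so by Lemma~\ref{lem:henselianProperties}(1) it is henselian along $I$. It also receives a map from $B_g$, and I would check via the universal property that it is the $I$-adic henselization of $B_g$. This check uses that étale algebras over $B_g$ arise, Zariski-locally along $V(I)$, from those over $A_g$ by finiteness, or equivalently that henselization commutes with finite base change. It gives $\Gamma(\varphi^{-1}(D(g)\cap V(I)),\cO_Y)=B_g^h=B\otimes_AA_g^h=\Gamma(D(g)\cap V(I),\til B)$. These identifications are compatible with restriction, so $\varphi_\ast\cO_Y=\til B$.
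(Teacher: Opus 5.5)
There is a genuine gap, in the one step the paper does not prove but takes as known: that $\til M$ is a sheaf. Your other arguments are sound and close to the paper's:
\begin{itemize}
\item $\Gamma(X,\til M)=M$ from $A^h=A$;
\item exactness from flatness of the $(A_g)^h$;
\item full faithfulness by comparing on generators;
\item quasi-coherence via a global presentation (the paper instead gets preservation of direct sums from the adjunction between $M\mapsto\til M$ and $\Gamma(X,-)$);
\item part (2), via base change of henselization, exactly as in the paper.
\end{itemize}
You try to derive the sheaf property of $\til M$ from that of $\cO$, and neither of your two routes does it.

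\textbf{First route.} It is circular. From left exactness of the \v{C}ech rows for the free modules $A^{(J)},A^{(K)}$, right exactness of the columns, and flatness of the terms, no diagram chase gives left exactness for $M$. Already $0\to0\to\mathbf Z\xrightarrow{2}\mathbf Z$ is exact with flat terms, but it is not exact after $\otimes\,\mathbf Z/2$.

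Here is the precise obstruction. After reducing to $g=1$, put $C^0=\prod_iA^h_{g_i}$, $C^1=\prod_{i,j}A^h_{g_ig_j}$ and $Q=\operatorname{coker}(C^0\to C^1)$. The sequence $0\to A\to C^0\to C^1\to Q\to0$ is exact with $C^0,C^1$ flat. From it one finds that $0\to M\to M\otimes_AC^0\to M\otimes_AC^1$ is exact if and only if $\operatorname{Tor}^A_1(M,Q)=\operatorname{Tor}^A_2(M,Q)=0$. So your reduction needs $Q$ to be flat, and that is \emph{equivalent} to the sheaf property for all $M$.

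\textbf{Second route.} It stops exactly where the work is. The map $A^h_{g_i}\otimes_AA^h_{g_j}\to A^h_{g_ig_j}$ is the $I$-adic henselization map of its source. Its kernel consists of elements killed by some element of $1+I$, so it is supported away from $V(I)$. Agreement of $I$-adic reductions says nothing about this kernel. You give no reason why it does not enlarge the equalizer after applying $M\otimes_A-$.

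\textbf{Repair.} The simplest fix is to cite the sheaf property of $\til M$, as the paper does. If you want a genuine reduction to structure sheaves, use the square-zero extension $A'=A\oplus M$:
\begin{itemize}
\item $A'$ is integral over $A$, so $(A',IA')$ is henselian by Lemma~\ref{lem:henselianProperties}~(1).
\item $\Sph(A',IA')$ has underlying space $V(I)$ with the same principal opens, since $(g,m)-(g,0)=(0,m)$ is nilpotent.
\item The base-change argument of your part (2), which only needs $A\to A'$ integral, gives $\Gamma(D(g)\cap V(I),\cO_{\Sph(A',IA')})=(A_g\oplus M_g)^h=A^h_g\oplus(M\otimes_AA^h_g)$, compatibly with restrictions.
\end{itemize}
The sheaf property of $\cO_{\Sph(A',IA')}$ then yields that of its direct summand $\til M$.
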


\begin{proof}
(1) The property $\Gamma(X,\til{M})=M$ follows from the sheaf property of $\til{M}$. From this, similarly to the scheme case, one can show that $M\mapsto\til{M}$ is left adjoint to $\Gamma_X\colon \cF\mapsto\Gamma(X,\cF)$. Hence $M\mapsto\til{M}$ preserves arbitrary direct sums, from which it follows that $\til{M}$ is quasi-coherent. All the rest is straightforward. 

(2) Since $\varphi^{-1}(D(f)\cap V(I))=D(g)\cap V(IB)$ for any $f\in A$ with the image $g$ in $B$, it suffices to show that $(B_g)^h_{IB_g}\cong B\otimes_A(A_f)^h_{IA_f}$, which follows easily from $B_g\cong B\otimes_AA_f$ and Lemma \ref{lem:henselianProperties} (1) (see also \cite[\tagref{0DYE}]{stacks-project}).
\end{proof}

\subsection*{2C. Some facts on the cohomology groups}\label{sub:torsion}

\begin{lem}\label{lem:torsion}
Let $(A,I)$ be a henselian pair, and $a\in A$ be a non-zero-divisor. Then multiplication by $a$ is injective on the cohomology $\HH^1(\Sph(A,I),\cO)$. In particular, if $A$ is an integral domain, this cohomology group is a torsion-free $A$-module.
\end{lem}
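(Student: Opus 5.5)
We use the short exact sequence of $A$-modules
\[
0\longto A\xrightarrow{\;a\;}A\longto A/aA\longto 0,
\]
which is exact because $a$ is a non-zero-divisor. Applying the exact functor $M\mapsto\til{M}$ of Proposition~\ref{prop:background2}~(1) gives a short exact sequence of $\cO_X$-modules on $X=\Sph(A,I)$:
\[
0\longto\cO_X\xrightarrow{\;a\;}\cO_X\longto\til{(A/aA)}\longto0.
\]
Here we use $\til{A}=\cO_X$, which holds by the definition of the structure sheaf on principal opens.

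Taking cohomology yields the exact segment
\[
\Gamma(X,\cO_X)\longto\Gamma(X,\til{(A/aA)})\longto\HH^1(X,\cO_X)\xrightarrow{\;a\;}\HH^1(X,\cO_X).
\]
Injectivity of multiplication by $a$ on $\HH^1$ is therefore equivalent to surjectivity of $\Gamma(X,\cO_X)\to\Gamma(X,\til{(A/aA)})$. By Proposition~\ref{prop:background2}~(1), $\Gamma(X,\til{M})=M$, so this map is the quotient map $A\to A/aA$, which is surjective. This proves the first claim.

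For the second claim, assume $A$ is an integral domain. Then every nonzero $a\in A$ is a non-zero-divisor, so multiplication by $a$ is injective on $\HH^1(X,\cO_X)$. That is exactly the statement that $\HH^1(X,\cO_X)$ is a torsion-free $A$-module.

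The only point needing care is the identification of the map on global sections. It is $\Gamma$ applied to $\til{A}\to\til{(A/aA)}$, and by the full faithfulness and the equality $\Gamma(X,\til{M})=M$ in Proposition~\ref{prop:background2}~(1), this is the original quotient map. Given that proposition, the argument has no real obstacle.
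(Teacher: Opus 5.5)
Your proposal is correct and follows the paper's proof: the short exact sequence $0\to A\xrightarrow{a}A\to A/aA\to0$, made into a sequence of sheaves by the exactness in Proposition~\ref{prop:background2}~(1), followed by the long exact sequence in cohomology. You also spell out the step the paper leaves implicit. Namely, $\Gamma(X,\cO_X)\to\Gamma(X,\til{(A/aA)})$ is the surjection $A\to A/aA$, so the connecting map is zero and multiplication by $a$ on $\HH^1(X,\cO_X)$ is injective.
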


\begin{proof}
Consider the exact sequence
$$
0\longto A\stackrel{a}{\longto}A\longto A/aA\longto 0
$$
which induces the exact sequence
$$
0\longto\cO\stackrel{a}{\longto}\cO\longto\cO/a\cO\longto 0
$$
by Proposition \ref{prop:background2} (1).
The assertion follows from the cohomology long exact sequence.
\end{proof}

\begin{prop}\label{prop:dvr-cohomology}
Let $R$ be a ring, and $\pi\in R$.
Let $A$ be a $\pi$-torsion free $R$-algebra, and suppose $(A,\pi A)$ is henselian. Then, for $Y=\Sph(A,\pi A)$, the multiplication by $\pi$ is an isomorphism on $\HH^i(Y,\cO_Y)$ for every $i>0$. 
\end{prop}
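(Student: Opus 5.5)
Injectivity of multiplication by $\pi$ comes directly from Lemma~\ref{lem:torsion}. Since $A$ is $\pi$-torsion free, $\pi$ is a non-zero-divisor on $A$. The short exact sequence $0\to A\xrightarrow{\pi}A\to A/\pi A\to 0$ gives, by Proposition~\ref{prop:background2}~(1), an exact sequence of sheaves
\[
0\longto\cO_Y\xrightarrow{\ \pi\ }\cO_Y\longto \widetilde{A/\pi A}\longto 0 .
\]
Its long exact cohomology sequence contains the segment
\[
\HH^{i-1}(Y,\widetilde{A/\pi A})\to\HH^i(Y,\cO_Y)\xrightarrow{\pi}\HH^i(Y,\cO_Y)\to\HH^i(Y,\widetilde{A/\pi A})\to\HH^{i+1}(Y,\cO_Y).
\]
So for $i>0$ it suffices to prove
\[
\HH^j(Y,\widetilde{A/\pi A})=0\quad\text{for all } j>0,
\]
together with surjectivity of $A=\Gamma(Y,\cO_Y)\to\Gamma(Y,\widetilde{A/\pi A})=A/\pi A$ to handle $i=1$. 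That surjectivity is immediate, since the global sections of $\widetilde{A/\pi A}$ equal $A/\pi A$ by Proposition~\ref{prop:background2}~(1).

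The key step is to identify $\widetilde{A/\pi A}$ with a sheaf on an ordinary affine scheme. The underlying space of $Y$ is $\Spec(A/\pi A)$, and on a principal open $D(g)$ the sections of $\widetilde{A/\pi A}$ are
\[
(A/\pi A)\otimes_A(A_g)^h_{\pi A_g}.
\]
Because $(A_g)^h/\pi(A_g)^h\cong A_g/\pi A_g$ (henselization does not change the quotient by the ideal), these sections equal $(A/\pi A)_{\bar g}$. Thus $\widetilde{A/\pi A}$ is exactly the structure sheaf of the affine scheme $\Spec(A/\pi A)$ on the same topological space. Its higher cohomology vanishes by the classical vanishing theorem for quasi-coherent sheaves on affine schemes (Serre's theorem for affine schemes). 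Hence $\HH^j(Y,\widetilde{A/\pi A})=0$ for $j>0$.

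With this vanishing, the long exact sequence shows that multiplication by $\pi$ on $\HH^i(Y,\cO_Y)$ is injective for every $i\ge1$: for $i\ge2$ the preceding term $\HH^{i-1}(Y,\widetilde{A/\pi A})$ vanishes, and for $i=1$ we use the surjectivity on global sections. It is also surjective, because the next term $\HH^i(Y,\widetilde{A/\pi A})$ vanishes. Therefore multiplication by $\pi$ is an isomorphism on $\HH^i(Y,\cO_Y)$ for all $i>0$.

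The only point requiring care is the identification $(A_g)^h\otimes_A A/\pi A\cong (A/\pi A)_{\bar g}$. I would deduce it from the fact that henselization along an ideal induces an isomorphism modulo that ideal, applied to $A_g$ and $\pi A_g$.
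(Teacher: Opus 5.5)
Your proposal is correct and is essentially the paper's argument: you use the short exact sequence $0\to\cO_Y\xrightarrow{\pi}\cO_Y\to\cO_{Y_0}\to0$ with $Y_0=\Spec(A/\pi A)$, surjectivity of $A\to A/\pi A$ on global sections for $i=1$, and vanishing of higher cohomology on the affine scheme $Y_0$. You also spell out why the cokernel is $\cO_{Y_0}$ (henselization along $\pi A_g$ does not change the quotient by $\pi$), which the paper leaves implicit; note only that Lemma~\ref{lem:torsion} by itself gives injectivity just for $i=1$, though your long exact sequence argument covers all $i$ anyway.
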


\begin{proof}
Since both localization and henselization are flat and $A$ is $\pi$-torsion free, we have an exact sequence
\[
  0\longto\cO_Y\stackrel{\pi}{\longto}\cO_Y\longto\cO_{Y_0}\longto 0,
\]
where $Y_0$ is the usual scheme $\Spec A/\pi A$. 
Since the cohomology long exact sequence in degree zero gives the exact sequence
$$
0\longto A\stackrel{\pi}{\longto}A\longto A/\pi A\longto 0, 
$$
and since we have $\HH^i(Y_0,\cO_{Y_0})=0$ for all $i>0$, we have the assertion.
\end{proof}

\begin{rem}\label{rem:not-finite}
Note that, if $R$ is a local ring with $\fm_R=\pi R$ and one of the groups $\HH^i(Y,\cO_Y)$ for $i>0$ is nonzero, then it is not finitely generated over $R$ (due to Nakayama).
\end{rem}

\section{An explicit class on the henselian affine line}\label{sec:line}

\begin{lem}\label{lem:involutions}
Let $F$ be a field of characteristic zero. Suppose that $u,v$ are algebraic over $F$, that $f=u+v\neq 0$, and that $f^2\in F$. Then $f\in F(u)$ or $f\in F(v)$.
\end{lem}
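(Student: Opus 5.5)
Write $c=f^2\in F$. We work inside a finite Galois extension $L/F$ containing $u$ and $v$; one exists because $u,v$ are algebraic and $F$ has characteristic zero. Then $f\in L$. If $f\in F$ we are done, so assume $f\notin F$. Since $f^2=c\in F$, the element $f$ is a root of $X^2-c$ and has degree exactly $2$ over $F$. Its only conjugate is $-f$, which differs from $f$ because $f\neq0$ and $\operatorname{char}F\neq 2$. Let $G=\Aut(L/F)$ and $H=\operatorname{Gal}(L/F(f))$. Then $H$ has index $2$ in $G$; it is the kernel of the character $\chi\colon G\to\{\pm1\}$ given by $\tau(f)=\chi(\tau)f$.

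By Galois correspondence, $f\in F(u)$ is equivalent to $\operatorname{Gal}(L/F(u))\subseteq H$, that is, to every $\tau$ fixing $u$ also fixing $f$. Suppose for contradiction that $f\notin F(u)$ and $f\notin F(v)$. Then there exist:
\begin{itemize}
\item[(i)] $\sigma\in G$ with $\sigma(u)=u$ and $\sigma(f)=-f$;
\item[(ii)] $\rho\in G$ with $\rho(v)=v$ and $\rho(f)=-f$.
\end{itemize}
From (i), $\sigma(v)=\sigma(f)-\sigma(u)=-f-u=-2u-v$. From (ii), $\rho(u)=-f-v=-u-2v$.

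The key step is to study the composite $\tau=\rho\sigma$, which lies in $H$, so $\tau(f)=f$. Compute its action on $u$:
\[
\tau(u)=\rho(u)=-u-2v .
\]
Apply $\tau$ again, using that $\tau$ is $F$-linear:
\[
\tau^2(u)=-\tau(u)-2\tau(v),\qquad \tau(v)=\rho(-2u-v)=2u+4v-v=2u+3v .
\]
So on the $F$-span of $u$ and $v$, $\tau$ acts through the matrix $M=\begin{pmatrix}-1&2\\-2&3\end{pmatrix}$, taking coordinates $(u,v)$ to the images written in terms of $u$ and $v$. This matrix has trace $2$ and determinant $1$, so it is unipotent: $M=\mathrm{Id}+N$ with $N=\begin{pmatrix}-2&2\\-2&2\end{pmatrix}$ and $N^2=0$. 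Hence $\tau^n$ acts by $\mathrm{Id}+nN$. Concretely,
\[
\tau^n(u)=u+n(-2u+2v)\cdot(\text{suitably}),
\]
so $\tau^n(u)=u-2n(u-v)$ up to the exact sign convention. Since $G$ is finite, $\tau^m=\mathrm{id}$ for some $m\geq1$. Because the characteristic is zero, $2m\neq0$, and so $u-v=0$.

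If $u=v$, then $f=2u\in F(u)$, contradicting our assumption. This proves the lemma.

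The main obstacle is the bookkeeping: the linear maps must be read on the possibly dependent elements $u,v$, not on abstract coordinates. Direct evaluation avoids this. Set $d=u-v$. Then
\[
\tau(d)=(-u-2v)-(2u+3v)=-3u-5v .
\]
This is not a multiple of $d$, so a cleaner invariant is needed. Instead, note that $\sigma(u+v)=-(u+v)$ and $\sigma(u)=u$. Computing $\tau^n(u)$ via the linear recursion $x_{n+1}=2x_n-x_{n-1}$, which is the characteristic polynomial $X^2-2X+1$ of $M$, gives
\[
\tau^n(u)=u+n(\tau(u)-u)=u-2n(u+v)=u-2nf .
\]
Finiteness of the order of $\tau$ then forces $f=0$, a contradiction. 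So the correct conclusion of the argument is $f=0$, not $u=v$. I will verify that the linear recursion holds for elements, using only the $F$-linearity of $\tau$ and $\tau(f)=f$: since $\tau(u)=u-2f$ and $\tau(f)=f$, induction gives $\tau^n(u)=u-2nf$ directly. This last observation is the clean core of the proof.
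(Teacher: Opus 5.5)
Your final argument is correct and is essentially the paper's proof. The composite $\rho\sigma$ of the two sign-flipping automorphisms fixes $f$ and sends $u\mapsto u-2f$, so $(\rho\sigma)^n(u)=u-2nf$, which contradicts finiteness of the automorphism group in characteristic zero; the paper works in $L=F(u,v)$ with $\Aut_F(L)$ rather than a Galois closure, which makes no real difference. In a write-up, delete the matrix detour and the false intermediate conclusion ``$\tau^n(u)=u-2n(u-v)$, hence $u=v$'', and keep only the direct computation $\tau(u)=\rho(u)=-u-2v=u-2f$, $\tau(f)=f$.
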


\begin{proof}
Suppose that neither containment holds. The finite extension
\[
  L=F(u,v)=F(u,f)=F(v,f)
\]
is quadratic over each of $F(u)$ and $F(v)$. Let $\sigma$ be the involution fixing $v$ and negating $f$, and let $\tau$ be the involution fixing $u$ and negating $f$. Both are $F$-automorphisms of
$L$. From $f=u+v$ we calculate
\[
  \sigma(u)=u-2f,\quad
  (\sigma\tau)(f)=f,\quad
  (\sigma\tau)(u)=u-2f.
\]
Consequently $(\sigma\tau)^n(u)=u-2nf$ for every $n\geq 0$, which are distinct in characteristic zero. This contradicts the finiteness of $\Aut_F(L)$.
\end{proof}

\begin{thm}\label{thm:line}
Let $R$ be a henselian DVR of characteristic zero with uniformizer $\pi$. Consider the henselian affine line
\[
  L_R=\Sph\bigl((R[x])^h_{(\pi)},\pi(R[x])^h_{(\pi)}\bigr)
\]
over $(R,\pi)$. Then, for every $0\ne a\in\pi R$, the cohomology class $\sigma_{a,x}$ $($Definition {\rm \ref{dfn:class}}$)$ is non-zero in $\HH^1(L_R,\cO)$.
\end{thm}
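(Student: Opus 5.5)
The plan is to argue by contradiction using Lemma~\ref{lem:involutions} and a ramification argument at the points $x=1$ and $x=0$. Put $B=(R[x])^h_{(\pi)}$, $K=\Frac(R)$, and $F=K(x)$. The underlying space of $L_R$ is $\mathbb{A}^1_k$ with $k=R/\pi$. Since $x-(x-1)=1$, the sets $U_0=D(x)$ and $U_1=D(x-1)$ cover $L_R$. Their sections are $B_0=(R[x]_x)^h$ and $B_1=(R[x]_{x-1})^h$, and the section $f=f_{a,x}$ lives on $U_0\cap U_1$.

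Because the \v{C}ech map $\check{\HH}^1(\cU,\cO)\to\HH^1(L_R,\cO)$ is injective, $\sigma_{a,x}=0$ would mean $f=g_0-g_1$ with $g_i\in B_i$, both restricted to $U_0\cap U_1$. I will derive a contradiction from such a decomposition. From \eqref{eq:intro-class},
\[
f^2=1+4(w^2+w)=\frac{x(x-1)+4a}{x(x-1)}\in F .
\]
Moreover $f\equiv 1$ modulo $\pi$, so $f\neq0$. All these rings are domains that are filtered colimits of étale algebras over localizations of $R[x]$, so $g_0$, $g_1$ and $f$ are algebraic over $F$. Lemma~\ref{lem:involutions}, applied with $u=g_0$ and $v=-g_1$, therefore gives $f\in F(g_0)$ or $f\in F(g_1)$.

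Suppose first that $f\in F(g_0)$. Choose an étale $R[x]_x$-algebra $C$ that is a domain and contains $g_0$, occurring at some stage of the colimit defining $B_0$. Then $F(g_0)\subset\Frac(C)$. The prime $(\pi,x-1)$ of $R[x]_x$ lies in $V(\pi)$, and $B_0$ has a prime over it. Hence $C$ has a prime over $(\pi,x-1)$, and so $C$ also has a prime $Q$ over the height-one generization $(x-1)$. Since $C$ is étale, $C_Q$ is a DVR unramified over the DVR $\cO_{F,(x-1)}$, with ramification index $1$. The intermediate field $F(f)\subset\Frac(C)$ must then also be unramified over $F$ at the restriction of $Q$. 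However $f^2$ has valuation $-1$ at $x=1$: the numerator $x(x-1)+4a$ takes the value $4a\neq0$ there, while the denominator has a simple zero. So $F(f)=F\bigl(\sqrt{f^2}\bigr)$ is a quadratic extension ramified at $(x-1)$, which is a contradiction. This also shows $f\notin F$. The case $f\in F(g_1)$ is symmetric: swap the roles via $x\mapsto1-x$ and use the place $x=0$, where $f^2$ again has odd valuation.

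The main obstacle is making the ramification step rigorous. One must check that an algebraic element of the henselization $B_0$ lies in $\Frac(C)$ for some étale $C$ that has a prime over $(x-1)$. It also has to be verified that unramifiedness of $C_Q$ over $\cO_{F,(x-1)}$ descends to the subfield $F(f)$; this holds because ramification indices are multiplicative in towers. Here one uses that $B_0$ is faithfully flat over $R[x]_x$ at the primes of $V(\pi)$, so that $(\pi,x-1)$ and its generization $(x-1)$ both have primes of $C$ above them. The hypothesis $a\ne0$ is used exactly to ensure that $x(x-1)+4a$ does not vanish at $x=0,1$. Characteristic zero enters through Lemma~\ref{lem:involutions}.
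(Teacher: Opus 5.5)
Your proof is correct. Its skeleton is the paper's: the Mayer--Vietoris injection, algebraicity over $F=K(x)$, and Lemma~\ref{lem:involutions} with $u=g_0$, $v=-g_1$. The genuine difference is how you exclude $f\in F(g_0)$ and $f\in F(g_1)$.

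\emph{The paper's route.} It reduces to complete $R$ and uses Hensel's lemma to find the zeros $\alpha\in\pi R$ and $\beta=1-\alpha$ of $f^2$. It then embeds $(R[x]_{x-1})^h$ into $R\dbl x\dbr$, using the universal property of henselization together with Lemma~\ref{lem:gluingOfDomainness}. Substituting $x=\alpha+z$, it reads off an odd valuation of $f^2$ in $K\dpl z\dpr$. Finally it passes from $\what{R}$ back to $R$.

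\emph{Your route.} You work instead at the generic points of the horizontal sections $x=1$ and $x=0$, where $f^2$ has a simple pole. You use that henselization is ind-\'etale and bijective over $V(\pi)$. Going down then gives a prime $Q$ of an \'etale stage $C$ lying over $(x-1)$. The local ring $C_Q$ is a DVR with ramification index $1$ over $R[x]_{(x-1)}$, so no element of $\Frac(C)$ can square to something of valuation $-1$.

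\emph{What each buys.} Your approach needs no completeness of $R$, no auxiliary roots $\alpha,\beta$, and no power series. It also isolates the mechanism: an \'etale neighbourhood of the special fibre is unramified along such a section, while $F(f)/F$ ramifies there. The paper's version trades this for a fully explicit Laurent-series certificate. Incidentally, your choice of the poles would also shorten the paper's argument: $f^2$ already has valuation $-1$ in $K\dpl x\dpr$, so the substitution $x=\alpha+z$ is unnecessary.

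\emph{Two assertions to justify.} Each needs only a line.
\begin{enumerate}
\item $(R[x]_{x(x-1)})^h$ is a domain, so that Lemma~\ref{lem:involutions} is applied inside a single field. The paper obtains this from Lemma~\ref{lem:gluingOfDomainness}(2).
\item The \'etale stage may be taken to be a domain $C$ injecting into this domain. The colimit stage is normal and Noetherian, hence a finite product of normal domains; pass to the factor through which the map to the domain factors. The resulting map from $C$ is injective because $C\otimes_{R[x]_x}F$ is a field, so $(0)$ is the only prime of $C$ over $(0)$.
\end{enumerate}
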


The proof of the theorem uses the following lemmas.

\begin{lem}\label{lem:Cech}
For any ringed space $T$ with an open covering $T=U\cup V$ consisting of two open subsets, the map
\begin{equation}\label{eq:cech-injection}
 \frac{\Gamma(U\cap V,\cO_T)}
 {\operatorname{im}\bigl(\Gamma(U,\cO_T)\oplus\Gamma(V,\cO_T)\bigr)}
 \lhook\joinrel\longrightarrow \HH^1(T,\cO_T),
\end{equation}
where the map from the direct sum is the difference of the restrictions, is injective.
\end{lem}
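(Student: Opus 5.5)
This is the standard Čech-to-derived comparison for a two-set cover. We can either invoke the general fact that $\check{\HH}^1(\cU,\cF)\to\HH^1(T,\cF)$ is injective for any open covering, or argue directly. The direct route goes as follows.

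Write $j\colon \cO_T\hookrightarrow \cO_T\to\cI$ for an embedding into an injective (hence flasque) $\cO_T$-module, and let $\cQ=\cI/\cO_T$. The long exact sequence gives
\[
\HH^1(T,\cO_T)\cong \Gamma(T,\cQ)/\operatorname{im}\Gamma(T,\cI).
\]
Define the map \eqref{eq:cech-injection} as follows. Take $s\in\Gamma(U\cap V,\cO_T)$. Because $\cI$ is flasque, its image in $\Gamma(U\cap V,\cI)$ extends to some $\tilde s\in\Gamma(V,\cI)$. Let $q_V$ be the image of $\tilde s$ in $\Gamma(V,\cQ)$, and let $q_U=0$ on $U$. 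On $U\cap V$ the section $q_V$ equals the image of $s$ in $\cQ$, which is zero. So $q_U$ and $q_V$ glue to a section $q\in\Gamma(T,\cQ)$, and we send $s$ to the class of $q$. This is independent of the choice of $\tilde s$, since two choices differ by a section of $\cI$ on $V$ that vanishes on $U\cap V$ and therefore extends by zero to a global section of $\cI$. If $s=\alpha|_{U\cap V}-\beta|_{U\cap V}$ with $\alpha\in\Gamma(U,\cO_T)$ and $\beta\in\Gamma(V,\cO_T)$, then we may take $\tilde s$ to be the glued section that is $\alpha$ on $U$ (through $\cI$) minus $\beta$, suitably arranged. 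More simply, in that case $q$ is the image of the global section of $\cI$ obtained by gluing $\alpha$ on $U$ with $\beta+\tilde s$ on $V$. Hence the map kills coboundaries.

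For injectivity, suppose the class of $q$ vanishes, so $q$ is the image of some $g\in\Gamma(T,\cI)$. Then $g|_U$ maps to $0$ in $\cQ$, so $g|_U=\alpha\in\Gamma(U,\cO_T)$. Also $\tilde s-g|_V$ maps to $0$ in $\cQ$ over $V$, so $\tilde s-g|_V=-\beta$ with $\beta\in\Gamma(V,\cO_T)$. Restricting to $U\cap V$ gives $s=\alpha-\beta$, so $s$ lies in the image of the difference map. The only point that needs care is the sign and orientation convention for the difference map, which only affects $s$ up to a sign. Beyond that I expect no real obstacle: the argument is pure sheaf theory and uses no property of $T$. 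Alternatively, one can cite the standard fact (e.g.\ \cite[\tagref{01EU}]{stacks-project}) that Čech $\HH^1$ injects into sheaf $\HH^1$, since it classifies torsors trivialized on the cover.
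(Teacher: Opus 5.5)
Your proof is correct and is essentially the paper's: the paper simply cites exactness of the Mayer--Vietoris sequence $\Gamma(U,\cO_T)\oplus\Gamma(V,\cO_T)\to\Gamma(U\cap V,\cO_T)\to\HH^1(T,\cO_T)$, and your computation with a flasque injective embedding $\cO_T\hookrightarrow\cI$ unwinds that exactness by hand in the lowest degree (sign conventions do not affect injectivity). Your first, muddled attempt at showing that coboundaries are killed can be dropped: the second version, gluing $\alpha$ on $U$ with $\beta+\tilde s$ on $V$ to a global section of $\cI$ whose image in $\Gamma(T,\cI/\cO_T)$ is $q$, already does the job.
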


\begin{proof}
This follows from the Mayer--Vietoris sequence for $\cU=\{U,V\}$ \cite[\tagref{01EB}]{stacks-project}.
\end{proof}

\begin{lem}\label{lem:gluingOfDomainness}
Let $A$ be a ring, and $a\in A$. Suppose $A$ is $a$-adically separated, i.e., $\bigcap_{n>0}a^nA=\{0\}$.

{\rm (1)} Let $f\colon A\to B$ be a ring homomorphism, and suppose that $B$ is $a$-torsion free and that the induced map $\ovl{f}\colon A/aA\to B/aB$ is injective. Then $f$ is injective.

{\rm (2)} If $A$ is $a$-torsion free and $A/aA$ is an integral domain, $A$ is an integral domain.
\end{lem}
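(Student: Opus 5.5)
For (1), the plan is to show that $\ker f=0$ using separatedness. Let $x\in A$ with $f(x)=0$. Then $\ovl f(\ovl x)=0$ in $B/aB$, and since $\ovl f$ is injective we get $x\in aA$, say $x=ax_1$. Then $0=f(x)=af(x_1)$ in $B$, and since $B$ is $a$-torsion free, $f(x_1)=0$. We then repeat the argument with $x_1$ in place of $x$. By induction we obtain, for every $n$, an element $x_n$ with $x=a^nx_n$ and $f(x_n)=0$. Hence $x\in\bigcap_{n>0}a^nA=\{0\}$, and $f$ is injective.

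For (2), first note that $A\neq0$, since $A/aA$ is a domain and hence nonzero. Let $x,y\in A$ be nonzero with $xy=0$. By separatedness there are largest integers $m,n\geq0$ with $x\in a^mA$ and $y\in a^nA$. Write $x=a^mx'$ and $y=a^ny'$; maximality gives $x',y'\notin aA$. Then $a^{m+n}x'y'=0$, and $a$-torsion freeness gives $x'y'=0$. Reducing mod $a$ yields $\ovl{x'}\,\ovl{y'}=0$ in $A/aA$ with both factors nonzero, which contradicts $A/aA$ being a domain.

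The only step needing care is the existence of the maximal exponent in (2). It exists because $x\neq0$ and $\bigcap_n a^nA=0$, so $x\notin a^NA$ for some $N$, and the exponents $k$ with $x\in a^kA$ form a downward-closed set. Alternatively, (2) follows from the same iteration as in (1): if $xy=0$ and $\ovl x\,\ovl y=0$ in $A/aA$, then one of $x,y$ lies in $aA$, and we can divide by $a$ using torsion-freeness. Iterating shows that one of them lies in every $a^nA$, but this requires a pigeonhole argument on which factor gets divided at each step, so the valuation argument above is cleaner. I expect no real obstacle here.
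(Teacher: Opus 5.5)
Your proof is correct and essentially the paper's: part (1) is the identical iteration. In part (2), the paper strips the maximal power of $a$ only from the nonzero factor $c$, then reruns the iteration of (1) on the other factor to get $b\in\bigcap_n a^nA=\{0\}$. You strip maximal powers from both factors and reach the contradiction in one step; both routes work, and the paper's one-sided normalization is exactly how it sidesteps the pigeonhole issue you raise for your alternative.
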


\begin{proof}
(1) Suppose $f(b)=0$ for $b\in A$. Then $b\in aA$, i.e., $b=ab_1$ for some $b_1\in A$.
We have $f(ab_1)=af(b_1)=0$ and since $B$ is $a$-torsion free, we have $f(b_1)=0$.
Then, similarly, we have $b_1=ab_2$ for some $b_2\in A$, i.e., $b\in a^2A$.
In this way, one can inductively show that $b\in a^nA$ for any $n>0$, which implies $b=0$ since $A$ is $a$-adically separated.

(2) Take $b,c\in A$ such that $bc=0$ and $c\neq 0$. Since $A$ is $a$-adically separated, we have $c=c_1a^n$, $c_1\not\in aA$. Since $A$ is $a$-torsion free, $bc_1a^n=0$ implies $bc_1=0$. Since $A/aA$ is an integral domain, $b$ belongs to $aA$, i.e., $b=ab_1$ for some $b_1\in A$.
We have $a(b_1c_1)=0$, and since $A$ is $a$-torsion free, we have $b_1c_1=0$.
Then, similarly, we have $b_1=ab_2$ for some $b_2\in A$, i.e., $b\in a^2A$.
The rest of the proof is similarly to that of (1).
\end{proof}

\begin{rem}\label{rem:separated}
Note that if $A$ is Noetherian and $a\in \Jac(A)$, then $A$ is $a$-adically separated due to Krull intersection theorem.
\end{rem}

\begin{proof}[{\rm Proof of Theorem \ref{thm:line}}]
Recall that $\sigma_{a,x}$ is the cohomology class that comes from the section
\begin{equation}\label{eq:line-class}
  f_a=1+2w_a,\quad
  w_a^2+w_a=\frac{a}{x(x-1)},\quad w_a\equiv0\pmod\pi
\end{equation}
on $D(x(x-1))$.
Hensel's lemma gives the unique indicated root $w_a$: the derivative
of $T^2+T-a/(x(x-1))$ at $T=0$ is $1$ modulo $\pi$.
We have
\begin{equation}\label{eq:line-square}
  f_a^2=1+\frac{4a}{x(x-1)}.
\end{equation}

We first assume that $R$ is complete. Write $K=\Frac(R)$, $k=R/\pi R$, and
$F=K(x)$. For $g=x$, $x-1$, or $x(x-1)$, put
\[
  E_g=(R[x,1/g])^h_{(\pi)}.
\]
These are Noetherian and $\pi$-torsion-free, since henselization is flat and preserves Noetherianity, and moreover, $\pi$-adically separated since $\pi E_g\subseteq\Jac(E_g)$.
Moreover,
\[
  E_g/\pi E_g=k[x,1/\ovl{g}],
\]
which is an integral domain. 
Hence by Lemma \ref{lem:gluingOfDomainness} (2), each $E_g$ is an integral domain.
Moreover, by Lemma \ref{lem:gluingOfDomainness} (1), the restriction maps
\[
  E_x\longrightarrow E_{x(x-1)},\quad
  E_{x-1}\longrightarrow E_{x(x-1)}
\]
are injective, since mod $\pi$ reduction of them are injective.
All these rings may therefore be viewed inside a
common field. 
Since henselization is constructed by a filtered colimit of \'etale algebras (see, e.g., \cite[\tagref{0A02}]{stacks-project}), each of their elements is algebraic over $F$.

Suppose that $f_a$ is in the image of the two restriction maps, say
\begin{equation}\label{eq:hypothetical-split}
  f_a=u+v,\quad u\in E_x,\quad v\in E_{x-1}.
\end{equation}
We will show that $f_a$ belongs to neither $F(u)$ nor $F(v)$,
contradicting Lemma~\ref{lem:involutions}.

Hensel's lemma gives $\alpha\in\pi R$ satisfying
\[
  \alpha^2-\alpha+4a=0.
\]
Set $\beta=1-\alpha$. Then
\begin{equation}\label{eq:branch-factorization}
  f_a^2=\frac{(x-\alpha)(x-\beta)}{x(x-1)},\quad
  \alpha\beta=4a\ne0,
\end{equation}
and $\alpha-\beta=2\alpha-1$ is a unit of $R$.

There is a natural map
\begin{equation}\label{eq:formal-embedding}
  E_{x-1}\longrightarrow R\dbl x\dbr.
\end{equation}
To see this, $x-1$ is a unit in $R\dbl x\dbr$, and $R\dbl x\dbr$ is
$\pi$-adically complete, hence is $(\pi)$-adically henselian.
The universal property extends the map from $R[x,(x-1)^{-1}]$.
The mod $\pi$ reduction of \eqref{eq:formal-embedding} is the injective map
$k[x,(x-1)^{-1}]\to k\dbl x\dbr$. Injectivity of
\eqref{eq:formal-embedding} follows from Lemma \ref{lem:gluingOfDomainness} (1), since $R\dbl x\dbr$ is
$\pi$-torsion-free.

As $R$ is complete and $\alpha\in\pi R$, substitution
$x=\alpha+z$ is an isomorphism from $R\dbl x\dbr$ to $R\dbl z\dbr$.
It follows that \eqref{eq:formal-embedding} induces a field embedding
\[
  F(v)\longrightarrow K\dpl z\dpr,\quad x\longmapsto\alpha+z.
\]
Under this embedding the rational function on the right-hand side
of \eqref{eq:branch-factorization} has $z$-valuation exactly one:
the numerator has a simple zero and the denominator has constant
term $\alpha(\alpha-1)=-4a\ne0$ in $K$. A square in $K\dpl z\dpr$ has
even valuation. Consequently $f_a\notin F(v)$.

Likewise $E_x$ embeds into $R\dbl x-1\dbr$. Substituting $x=\beta+z$,
which is permitted because $\beta-1\in\pi R$, proves
$f_a\notin F(u)$. This contradicts Lemma~\ref{lem:involutions}.
Thus \eqref{eq:hypothetical-split} is impossible. The injection
\eqref{eq:cech-injection} proves the desired non-vanishing.

For a non-complete henselian DVR $R$, a hypothetical decomposition
\eqref{eq:hypothetical-split} maps, by functoriality of henselization,
to the same decomposition over $\what{R}$. The distinguished root
$w_a$ maps to the distinguished root over $\what{R}$ by uniqueness,
and $a$ remains nonzero; recall that $R\to\what{R}$ is faithfully flat, hence is injective.
This contradicts the complete case. 
\end{proof}

\begin{cor}\label{cor:line-torsion}
The class $\sigma_{a,x}$ is not annihilated by any
nonzero element of $R$. Moreover
$\HH^1(L_R,\cO)$ is a nonzero $\Frac(R)$-vector space and is not
finitely generated as an $R$-module.
\end{cor}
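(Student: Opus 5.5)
The plan is to combine Theorem~\ref{thm:line} with the torsion-freeness results of \S2C. First, for any nonzero $r\in R$ we have $r\sigma_{a,x}\neq0$. Here $L_R=\Sph(B,\pi B)$ with $B=(R[x])^h_{(\pi)}$. This ring is Noetherian, and it is an integral domain by Lemma~\ref{lem:gluingOfDomainness}~(2), because $B/\pi B=k[x]$ and $B$ is $\pi$-torsion free and $\pi$-adically separated (Remark~\ref{rem:separated}). The image of $r$ in $B$ is nonzero because $R\to B$ is injective: apply Lemma~\ref{lem:gluingOfDomainness}~(1) to $R\to B$, noting that $k\to k[x]$ is injective and $R$ is $\pi$-adically separated. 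Lemma~\ref{lem:torsion} then says that multiplication by $r$ is injective on $\HH^1(L_R,\cO)$. Since $\sigma_{a,x}\neq0$ by Theorem~\ref{thm:line}, it follows that $r\sigma_{a,x}\neq0$.

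Second, we show that $\HH^1(L_R,\cO)$ is a $\Frac(R)$-vector space. Proposition~\ref{prop:dvr-cohomology} applies with $A=B$: $B$ is $\pi$-torsion free and $(B,\pi B)$ is henselian, so multiplication by $\pi$ is an isomorphism on $\HH^1(L_R,\cO)$. Every nonzero element of $R$ has the form $u\pi^n$ with $u\in R^\times$, so every nonzero element of $R$ acts bijectively. The $R$-module structure therefore extends uniquely to a $\Frac(R)$-vector space structure. The group is nonzero because it contains $\sigma_{a,x}$ (take, say, $a=\pi$).

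Third, we show that $\HH^1(L_R,\cO)$ is not finitely generated over $R$. This follows directly from Remark~\ref{rem:not-finite}. If $M=\HH^1(L_R,\cO)$ were finitely generated, then $\pi M=M$ together with Nakayama's lemma would force $M=0$, a contradiction.

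There is no real obstacle in this argument. The only point needing care is confirming that $B$ is a domain and that $R\to B$ is injective, so that Lemma~\ref{lem:torsion} applies with nonzero elements. Both facts follow from Lemma~\ref{lem:gluingOfDomainness} exactly as in the proof of Theorem~\ref{thm:line}. Alternatively, the bijectivity from the second step already gives the first claim directly.
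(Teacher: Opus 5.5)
Your proof is correct and follows the paper's route: the paper's proof just applies Proposition~\ref{prop:dvr-cohomology} (so $\pi$, and hence every $u\pi^n$, acts bijectively) together with Remark~\ref{rem:not-finite} (Nakayama). Your first step through Lemma~\ref{lem:torsion}, which uses that $B$ is a domain and that $R\to B$ is injective, is valid but redundant, as you point out yourself at the end.
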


\begin{proof}
Apply Proposition~\ref{prop:dvr-cohomology} and
Remark~\ref{rem:not-finite}.
\end{proof}

\section{Proof of the Noetherian case}\label{sec:generalNoetherian}
Let us first prove a useful lemma, which will be used in the following discussion.

\begin{lem}[Trace and first cohomology]\label{lem:trace}
Let $(A,I)$ be a Noetherian henselian pair with $A$ a normal domain
of characteristic zero. Let $A\hookrightarrow B$ be a finite
injective homomorphism. For the induced finite morphism
\[
  \varphi:\Sph(B,IB)\longrightarrow\Sph(A,I),
\]
the natural map
\[
  \HH^1(\Sph(A,I),\cO)\longrightarrow\HH^1(\Sph(B,IB),\cO)
\]
is injective. 
\end{lem}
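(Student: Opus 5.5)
The plan is to split the map by a trace. Put $X=\Sph(A,I)$ and $Y=\Sph(B,IB)$. By Proposition \ref{prop:background2} (2), $\varphi_*\cO_Y=\til{B}$. Since $\varphi$ is finite, $\varphi_*$ is exact on quasi-coherent sheaves and $\HH^1(Y,\cO_Y)=\HH^1(X,\til{B})$. This identification can be checked directly with \v{C}ech cohomology for principal coverings: the preimage of $D(f)$ is $D(f)$, and the henselizations agree by the argument in Proposition \ref{prop:background2} (2). Under it, the pullback map is the map induced by the sheaf map $\cO_X=\til{A}\to\til{B}$, which comes from $A\to B$ via the functor $M\mapsto\til{M}$ of Proposition \ref{prop:background2} (1). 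So it suffices to find an $A$-linear map $\tau\colon B\to A$ whose composite with $A\to B$ is multiplication by a nonzero $d\in A$. Then $\til{\tau}$ gives a map $\HH^1(X,\til{B})\to\HH^1(X,\cO_X)$, and the composite $\HH^1(X,\cO_X)\to\HH^1(X,\cO_X)$ is multiplication by $d$. That composite is injective by Lemma \ref{lem:torsion}, since $A$ is a domain and $d\neq0$. Hence the pullback map is injective.

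To construct $\tau$, first reduce to the case where $B$ is a domain. Since $A\to B$ is injective and finite, some minimal prime $\fq$ of $B$ satisfies $\fq\cap A=0$: the generic point of $\Spec A$ lies in the image, and a prime over $0$ can be shrunk to a minimal prime still over $0$. The composite $A\to B\to B/\fq$ is still finite and injective, and the pullback to $\HH^1$ factors through $\HH^1(\Sph(B,IB),\cO)$. Injectivity for $B/\fq$ therefore implies injectivity for $B$, so we may assume $B$ is a domain. Now $L=\Frac(B)$ is a finite extension of $K=\Frac(A)$ of degree $n$. Separability is automatic in characteristic zero, though we only need $\Tr_{L/K}(1)=n\neq0$.

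Every $b\in B$ is integral over $A$. Since $A$ is normal, the coefficients of the minimal polynomial of $b$ over $K$ lie in $A$, and so $\Tr_{L/K}(b)\in A$. Set $\tau=\Tr_{L/K}|_B\colon B\to A$. It is $A$-linear and satisfies $\tau(a)=na$ for $a\in A$, so we take $d=n$, which is nonzero because $\operatorname{char}A=0$.

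The step needing the most care is making precise the identification of the pullback with the map induced by the sheaf map $\til{A}\to\til{B}$. In particular, this identification must be natural enough that $\til{\tau}$ acts on $\HH^1(X,\til{B})=\HH^1(Y,\cO_Y)$ compatibly. Concretely, one has to check that $\varphi_*$ preserves flasque or \v{C}ech-acyclic resolutions, or equivalently that $R^1\varphi_*\cO_Y=0$. I would handle this with the Leray spectral sequence: for an affine $D(f)\subset X$, the stalks of $R^1\varphi_*\cO_Y$ are colimits of $\HH^1$ of $\varphi^{-1}$ of small neighbourhoods. To avoid needing vanishing on affine henselian schemes, which fails in general, I would instead only use the injective edge map $\HH^1(X,\varphi_*\cO_Y)\hookrightarrow\HH^1(Y,\cO_Y)$ and apply the trace argument to $\HH^1(X,\til{B})$. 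Composing $\HH^1(X,\cO_X)\to\HH^1(X,\til{B})\hookrightarrow\HH^1(Y,\cO_Y)$ recovers the pullback, and the first arrow is injective by the trace argument, so the composite is injective. This sidesteps the acyclicity question entirely.
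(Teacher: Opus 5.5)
Your argument is correct and is essentially the paper's proof. The trace $B\to B/\fq\to A$ through a prime over $(0)$, landing in $A$ by normality, splits $\cO_X\to\varphi_*\cO_Y$ up to multiplication by $n=[L:F]$; that multiplication is injective on $\HH^1$ by Lemma~\ref{lem:torsion}, and the low-degree Leray injection $\HH^1(X,\varphi_*\cO_Y)\hookrightarrow\HH^1(Y,\cO_Y)$ finishes. Your last paragraph rightly drops the unneeded and unjustified identification $\HH^1(Y,\cO_Y)=\HH^1(X,\til B)$, and the only cosmetic difference is that you first reduce to $B$ a domain, whereas the paper composes the trace with $B\to B/\fq$ directly.
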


\begin{proof}
Put $F=\Frac(A)$ and choose a prime $\fq$ of $B$ lying over $(0)$.
Then $L=\Frac(B/\fq)$ is a finite extension of $F$. The map
\[
  T:B\longrightarrow F,\quad
  b\longmapsto\Tr_{L/F}(\ovl{b})
\]
has image in $A$. Indeed, $\ovl{b}$ is integral over $A$, so its
conjugates and their sum are integral over $A$; this sum belongs
to $F$, and $A$ is integrally closed in $F$.
The resulting $A$-linear map satisfies
\[
  T\circ\iota=n\,\mathrm{id}_A,
  \quad n=[L:F]>0,
\]
where $\iota\colon A\injto B$ is the given inclusion.

By Proposition~\ref{prop:background2} (2), sheafification gives
maps whose composition is multiplication by $n$:
\[
  \cO_{\Sph(A,I)}\longto\varphi_{\ast}\cO_{\Sph(B,IB)}
    \stackrel{\,T\,}{\longto}\cO_{\Sph(A,I)}.
\]
Since $A$ is a characteristic-zero domain, $n$ is a non-zero-divisor.
Lemma~\ref{lem:torsion} shows that its action on $\HH^1(\cO)$
is injective. Therefore the first arrow induces an injection on
first cohomology. The low-degree Leray sequence (the exact sequence $0\to E^{1,0}_2\to E^1$ in the Leray spectral sequence \cite[\tagref{01F2}]{stacks-project}) supplies a further injection
\[
  \HH^1(\Sph(A,I),\varphi_{\ast}\cO_{\Sph(B,IB)})
  \injlongto
  \HH^1(\Sph(B,IB),\cO).
\]
Their composition is the claimed map.
\end{proof}

Let $(A,I)$, $(B,IB)$, and $\fp\in V(I)$ be as in Theorem \ref{thm:main1}.
We know that $B_0=B\otimes_A\ka(\fp)$ is a finite type algebra over the field $\ka(\fp)$; let $d$ be the dimension of $B_0$, which we know to be positive.

\begin{dfn}\label{dfn:normalization}
A $d$-tuple $(b_1,\ldots,b_d)$ of elements of $B$ is called a {\em normalization tuple} if the $\ka(\fp)$-algebra homomorphism 
$$
\ka(\fp)[t_1,\ldots,t_d]\longto B_0
$$
that maps $t_i$ ($i=1,\ldots,d$) to the image of $b_i$ is injective and finite.
\end{dfn}

The existence of a normalization tuple in $B$ is immediate from classical Noether normalization theorem if $B\to B_0$ is surjective (i.e., $\fp$ is maximal in $A$). In general, one can choose $b_i\in B$ and $c_i\in A\setminus\fp$ ($i=1,\ldots,d$) such that $\ovl{b}_1/\ovl{c}_1,\ldots,\ovl{b}_d/\ovl{c}_d$ (where $\ovl{\,\cdot\,}$ indicates mod $\fp$ reduction) give coordinates in Noether normalization of $B_0$. Then, since $\ovl{c}_i\in\ka(\fp)$ are constants, $(b_1,\ldots,b_d)$ does the work.

The goal of this section is to prove the following theorem.
\begin{thm}\label{thm:noetherian_case}
In the situation as in Theorem {\rm \ref{thm:main1}}, suppose that $A$ is Noetherian. Let $(b_1,\ldots,b_d)$ be a normalization tuple of elements in $B$, and $b$ be one of the $b_i$'s. Then the cohomology class $\sigma_{a,b}$ by any non-zero $a\in I$ is non-zero in $\HH^1(X,\cO_X)$.
\end{thm}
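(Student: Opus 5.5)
We follow the outline of Step 2 in the introduction: reduce to a complete DVR, find the henselian line inside $B$ through $b$, and pull back the nonzero class of Theorem~\ref{thm:line}.

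First we reduce to the case of a complete DVR base. Since $A$ is a Noetherian domain and $\fp\ne0$ (because $0\ne I\subseteq\fp$), we choose a DVR $R$ with $A\subseteq R\subseteq\Frac(A)$ dominating $A_\fp$; this is the standard existence of a discrete valuation ring dominating a Noetherian local domain. Then $IR\subseteq\fm_R$, $\what R$ is a complete DVR of characteristic zero, and every nonzero $a\in I$ stays nonzero in $\what R$. Put $B'=(B\otimes_A\what R)^h$, henselized along $\pi$; this is $\pi$-adically the same as $IB'$. By the compatibility of $\sigma$ noted after Definition~\ref{dfn:class}, $\sigma_{a,b}$ maps to $\sigma_{a,b'}\in\HH^1(\Sph B',\cO)$, where $b'$ is the image of $b$. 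It therefore suffices to show $\sigma_{a,b'}\ne0$.

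Next we realize the line inside $B'$. By Theorem~\ref{thm:line}, $\sigma_{a,t}\ne0$ on $\Sph\what R\{t\}$. The map $t\mapsto b'$ sends $\sigma_{a,t}$ to $\sigma_{a,b'}$, so it suffices to prove that $\HH^1(\Sph\what R\{t\},\cO)\to\HH^1(\Sph B',\cO)$ is injective. The plan is to factor this map through $C=\what R\{t_1,\dots,t_d\}$, with $t_i\mapsto b_i'$ and $t=t_1$.

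There are two injectivity steps along this factorization.

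\emph{From $C$ to $B'$.} The special fibre $B'/\fm B'=B_0\otimes_{\ka(\fp)}k$, where $k$ is the residue field of $R$, is finite over $k[t_1,\dots,t_d]$ because finiteness is preserved by base change. Proposition~\ref{prop:background1}(1), applied with $\fm$-adic henselian data (note $\fm=\pi\what R$), gives that $C\to B'$ is finite. $C$ is a Noetherian regular, hence normal, domain of characteristic zero, by Lemma~\ref{lem:gluingOfDomainness} and the flatness of henselization. The map $C\to B'$ need not be injective, so we replace $B'$ by $B'/\fq$ for a minimal prime $\fq$ whose contraction is $0$. Such a $\fq$ should exist because $B'$ is $\pi$-torsion free (by $A$-flatness of $B$) and the special fibre is injective over $k[t_1,\dots,t_d]$, so some component dominates $C$. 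The trace argument of Lemma~\ref{lem:trace} already works with the quotient by $\fq$, so it yields injectivity of $\HH^1(C)\to\HH^1(B')$ directly.

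\emph{From $\what R\{t\}$ to $C$.} We use the retraction $C\to\what R\{t\}$, $t_i\mapsto0$ for $i\ge2$. Composing it with $\what R\{t\}\to C$ gives the identity on $\Sph\what R\{t\}$, so the map on $\HH^1$ is split injective.

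The main obstacle is the finiteness and dominance claim for $C\to B'$. It requires checking that $B'/\pi B'$ really equals $B_0\otimes k$ up to nilpotents, and that flatness over $A$ transfers to $\pi$-torsion-freeness of $B'$. If dominance of some component fails, we instead argue with the generic fibre of $B'$, which should be finite and flat over that of $C$ by fibrewise flatness.
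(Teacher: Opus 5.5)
Your outline matches the paper's proof: a dominating DVR and its completion, the factorization $\what R\{t_1\}\to C=\what R\{t_1,\dots,t_d\}\to B'$, the retraction for the first map, the trace Lemma~\ref{lem:trace} for the second, and comparison of $\sigma_{a,t_1}$ with $\sigma_{a,b}$. The gap is exactly at the step you flag, and your workaround does not avoid it.

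Take a finite map $C\to B'$ with $C$ a domain. A prime $\fq\subset B'$ with $\fq\cap C=0$ exists if and only if $C\to B'$ is injective, because a nonzero kernel lies in the contraction of every prime of $B'$. So replacing $B'$ by $B'/\fq$ does not get around possible non-injectivity; it presupposes injectivity. Your claim that such a $\fq$ ``should exist'' because some component dominates $C$ is an unproved restatement of what is needed. The fallback through fibrewise flatness does not work either. That criterion would require $B_0\otimes_{\ka(\fp)}k$ to be flat over $k[t_1,\dots,t_d]$, which fails in general (e.g.\ when $B_0$ is not Cohen--Macaulay).

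The missing ingredient is Lemma~\ref{lem:gluingOfDomainness}\,(1). It turns the facts you already name into injectivity of $C\to B'$ directly, and this is what the paper does. Its three hypotheses hold:
\begin{itemize}
\item $C$ is Noetherian with $\pi\in\Jac(C)$, hence $\pi$-adically separated.
\item $B'$ is $\pi$-torsion free, because $B\otimes_A\what R$ is $\what R$-flat by $A$-flatness of $B$, and henselization is flat.
\item The reduction $k[t_1,\dots,t_d]=C/\pi C\to B'/\pi B'$ is injective.
\end{itemize}
For the last point there is no ``up to nilpotents'' issue. Henselization does not change the quotient by the ideal, so $B'/\pi B'=B\otimes_Ak$. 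Since $R$ dominates $A_\fp$, the map $A\to k$ factors through $\ka(\fp)$, so $B'/\pi B'=B_0\otimes_{\ka(\fp)}k$ exactly. The reduction is therefore the base change of the finite injective map $\ka(\fp)[t_1,\dots,t_d]\to B_0$ along the field extension $\ka(\fp)\to k$, hence injective. With $C\to B'$ finite and injective, Lemma~\ref{lem:trace} applies as stated, with no choice of $\fq$.

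A smaller point: calling $C$ ``regular'' is not justified by Lemma~\ref{lem:gluingOfDomainness} and flatness alone. What the trace lemma needs is normality, which holds because $C$ is a filtered colimit of \'etale algebras over the normal ring $\what R[t_1,\dots,t_d]$.
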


\begin{proof}
We choose a DVR $R$ in the fractional field of $A$ and a map $A\to R$ that dominates $\fp$. Let $\pi\in R$ be a uniformizer, and $\what{R}$ be the $\pi$-adic completion of $R$. Note that, since $I\subset\fp$, the ideal $IR$ is of the form $(\pi^n)$ for some $n>0$, and hence $A\to\what{R}$ is adic. Note also that the maps $A\to A_{\fp}\to R\to \what{R}$ are all injective. In particular, any non-zero $a\in I$ has the non-zero image in $\what{R}$.

The images of $b_1,b_2,\ldots,b_d$ determine the second map of the chain of maps
\begin{equation}\label{eqn:chain}
\what{R}\{t_1\}\stackrel{\alpha}{\longto}\what{R}\{t_1,\ldots,t_d\}\stackrel{\beta}{\longto} B_{\what{R}}=(B\otimes_A\what{R})^h.
\end{equation}
By Proposition \ref{prop:background1} (2), the two maps in \eqref{eqn:chain} are hfp. Set $C=\what{R}\{t_1,\ldots,t_d\}$ and $D=B_{\what{R}}$, and let $k=\what{R}/\pi\what{R}$ be the residue field of $\what{R}$. Then the map 
\begin{equation}\label{eqn:DVR_red}
\ovl{\beta}\colon C\otimes_{\what{R}}k=k[t_1,\ldots,t_d]\longto B\otimes_Ak=D\otimes_{\what{R}}k
\end{equation}
induced from $\beta$ (i.e., $\ovl{\beta}=\beta\otimes_{\what{R}}k$) is the base change of $\ka(\fp)[t_1,\ldots,t_d]\to B_0=B\otimes_A\ka(\fp)$ by the field extension $\ka(\fp)\injto k$, and hence is injective and finite.
In particular, $\beta$ is injective (due to Lemma \ref{lem:gluingOfDomainness} (1)) and finite (due to Proposition \ref{prop:background1} (1)).
Moreover, $C=\what{R}\{t_1,\ldots,t_d\}$ is $\pi$-torsion free (i.e., $\what{R}$-flat) and $\pi$-adically separated (cf.\ Remark \ref{rem:separated}), and $C/\pi C\simeq k[t_1,\ldots,t_d]$ is an integral domain, $C$ is an integral domain (Lemma \ref{lem:gluingOfDomainness} (2)).

Since \'etale algebras over the normal ring $\what{R}[t_1,\ldots,t_d]$ are normal, and a filtered colimit of normal rings is again normal (\cite[\tagref{037D}]{stacks-project}), $C=\what{R}\{t_1,\ldots,t_d\}$ is a normal domain of characteristic zero. Hence the second map of 
\begin{equation*}\label{eqn:chain_coh}
\HH^1(\Sph \what{R}\{t_1\},\cO)\longto\HH^1(\Sph \what{R}\{t_1,\ldots,t_d\},\cO)\longto \HH^1(\Sph B_{\what{R}},\cO)
\end{equation*}
is injective by Lemma \ref{lem:trace}; the first map is also injective, since $\what{R}\{t_1\}\to\what{R}\{t_1,\ldots,t_d\}$ has the retraction map $\what{R}\{t_1,\ldots,t_d\}\to \what{R}\{t_1\}$.

Let $b$ be one of $b_i$'s, say $b=b_1$.
Now it has already been shown in Theorem \ref{thm:line} that $\sigma_{a,t_1}\in\HH^1(\Sph \what{R}\{t_1\},\cO)$ is non-zero for any non-zero $a\in I$, and in view of the diagram
$$
\xymatrix{&\HH^1(\Sph \what{R}\{t_1\},\cO)\ar@{_{(}->}[d]\\
\HH^1(\Sph(B,IB),\cO)\ar[r]&\HH^1(\Sph B_{\what{R}},\cO)}
$$
where $\sigma_{a,t_1}\in\HH^1(\Sph \what{R}\{t_1\},\cO)$ and $\sigma_{a,b}\in\HH^1(\Sph(B,IB),\cO)$ have the same image in $\HH^1(\Sph B_{\what{R}},\cO)$, we deduce that $\sigma_{a,b}\neq 0$.
\end{proof}

\section{Removal of the Noetherian hypothesis}\label{sec:general}
We now prove Theorem \ref{thm:main1} in full generality. 
Let $(A,I)$, $(B,IB)$, and $\fp\in V(I)$ be as in Theorem \ref{thm:main1}.

\begin{lem}[Flat algebraization and finite approximation]\label{na:approximation}
There exists an $A$-flat finitely presented algebra $S$ with $B\simeq S^h_{IS}$.  Moreover, for any prescribed finite subset of $A$, there are a finitely generated $\mathbf Z$-subalgebra $A_0\subseteq A$ containing that subset and a flat finitely presented $A_0$-algebra $S_0$ such that
$S\simeq S_0\otimes_{A_0}A$.
\end{lem}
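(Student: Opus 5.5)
We first algebraize $B$, then descend the resulting finitely presented algebra to a finitely generated $\mathbf Z$-subalgebra of $A$.

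\textbf{Step 1: algebraization.} Since $B$ is hfp over $A$, write $B=A\{X_1,\ldots,X_n\}/(g_1,\ldots,g_m)$. Each $g_j$ lies in the henselization $A[X]^h$, which is a filtered colimit of étale $A[X]$-algebras $E$ with $E/IE=A[X]/I$. Hence all $g_j$ come from a single such étale algebra $E$, which is finitely presented over $A$. Put $S'=E/(g_1,\ldots,g_m)$. Then $S'$ is finitely presented over $A$. Henselization commutes with quotients and $E^h=A\{X\}$, so $(S')^h_{IS'}\simeq B$.

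\textbf{Step 2: flatness.} The ring $S'$ need not be $A$-flat. We know only that $B=(S')^h$ is $A$-flat, and $S'\to B$ is flat, being a henselization. The flat locus of $S'$ over $A$ is open in $\Spec S'$, by openness of the flat locus for finitely presented algebras (e.g.\ \cite[\tagref{0399}]{stacks-project} / Raynaud--Gruson). I would show it contains $V(IS')$. For a prime $\fq\supseteq IS'$, there is a prime of $B$ over $\fq$, since $B/IB=S'/IS'$. Because $S'_\fq\to B_{\fq'}$ is flat local, hence faithfully flat, and $B_{\fq'}$ is $A$-flat, the ring $S'_\fq$ is $A$-flat.

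So the non-flat locus is a closed set $V(J)$ disjoint from $V(IS')$. Then $J+IS'=S'$, and we may choose $s\in J$ with $s\equiv 1\pmod{IS'}$. Set $S=S'_s$. This ring is finitely presented and $A$-flat. Its henselization is still $B$: $s$ becomes a unit in any $I$-adically henselian (hence Zariskian) algebra receiving $S'$, so the universal properties agree.

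\textbf{Step 3: descent.} Write $A=\varinjlim A_\la$ over finitely generated $\mathbf Z$-subalgebras containing the prescribed finite subset. By standard limit results (EGA IV 8.8.2 and 11.2.6, \cite[\tagref{04AI}]{stacks-project}), the finitely presented $S$ descends to some $S_\la$ of finite presentation over $A_\la$ with $S\simeq S_\la\otimes_{A_\la}A$. Flatness descends after enlarging $\la$. Take $A_0=A_\la$ and $S_0=S_\la$.

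I expect the main obstacle to be the flatness modification in Step 2. One must check carefully that the flat locus argument applies, which requires only finite presentation, with no Noetherian hypothesis on $A$. One must also check that inverting $s$ does not change the henselization.
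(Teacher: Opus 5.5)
Your proposal is correct. Step~3 is the same as the paper's argument: descend the presentation of $S$ to a finitely generated $\mathbf Z$-subalgebra containing the prescribed finite set, then enlarge that subalgebra until the descended algebra is flat, by the standard limit theorem for flatness of finitely presented algebras.

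You differ on the first assertion. The paper does not prove it; it cites \cite[Lemma~5.2.11]{devadas2026henselian}. You instead give a self-contained argument:
\begin{itemize}
\item You lift the finitely many relations of $B$ to a single \'etale $A[X]$-algebra $E$ in the colimit presentation of $A\{X\}$.
\item You get $(S')^h\simeq B$ because henselization is unchanged by such \'etale neighbourhoods and commutes with quotients.
\item You then repair flatness. Since $S'\to B$ is flat with $S'/IS'\simeq B/IB$, faithful flatness of the local maps $S'_{\fq}\to B_{\fq'}$ puts all of $V(IS')$ inside the $A$-flat locus. That locus is open by EGA~IV~11.3.1.
\item Inverting an element $s\equiv 1\pmod{IS'}$ of the ideal cutting out the non-flat locus gives a flat, finitely presented $S$. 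Its henselization is still $B$, since $s$ becomes a unit in every henselian pair receiving $S'$.
\end{itemize}

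What your route buys is transparency. It shows that only the flatness of $B$ along $V(I)$ matters and how that transfers to an algebraic model, using nothing specific to henselian schemes beyond the colimit description and flatness of henselization. The cost is reliance on openness of the flat locus for finitely presented algebras over a non-Noetherian base, which is itself a nontrivial limit theorem. The paper keeps the lemma short by outsourcing this step to a single reference.
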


\begin{proof}
The first assertion is \cite[Lemma~5.2.11]{devadas2026henselian}.
Write $A$ as the filtered union of its finitely generated $\mathbf Z$-subalgebras.  
Finite presentation descends by \cite[\tagref{05N9}]{stacks-project}, and flatness holds at a sufficiently large stage by \cite[Tag~02JO]{stacks-project}, applied with the algebra itself as the module.  
Increasing the stage includes the prescribed finite subset; flatness and finite presentation are preserved by this base change.
\end{proof}

\begin{proof}[Proof of Theorem {\rm \ref{thm:main1}}]
By considering the $A_0$'s as above for all finite subsets of $A$, taking henselization of $(A_0,I_0)$ ($I_0=I\cap A_0$) each time, and replacing $A_0$ by the image of $A^h_0$ in $A$ (which is again henselian by Lemma \ref{lem:henselianProperties} (1)), one has:
\begin{itemize}
\item a filtered direct system $\{(A_{\la},I_{\la})\}_{\la\in\La}$ with $I_{\la}\neq 0$ of Noetherian henselian pairs by integral domains $A_{\la}$ that converges to $(A,I)$; 
\item an index $\la_0\in\La$ and $A_{\la_0}$-flat finitely presented $A_{\la_0}$ algebra $S_{\la_0}$ such that $S\simeq S_{\la_0}\otimes_{A_{\la_0}}A$.
\end{itemize}

For any $\la\geq\la_0$, we set $S_{\la}=S_{\la_0}\otimes_{A_{\la_0}}A_{\la}$ and $\fp_{\la}=\fp\cap A_{\la}$; if $B_{\la}$ is the $I_{\la}$-adic henselization of $S_{\la}$, we have $(B,IB)=\varinjlim_{\la\geq\la_0}(B_{\la},I_{\la}B_{\la})$.
For each $\la\geq\la_0$, $S_{\la}\otimes_{A_{\la}}\ka(\fp_{\la})$ is a finite type algebra over $\ka(\fp_{\la})$. Since, for any $\la\geq\la_0$, 
\begin{equation}\label{eqn:simeq}
\begin{split}
S_{\la}\otimes_{A_{\la}}\ka(\fp_{\la})&\simeq \big(S_{\la_0}\otimes_{A_{\la_0}}A_{\la}\big)\otimes_{A_{\la}}\ka(\fp_{\la})\\ &\simeq S_{\la_0}\otimes_{A_{\la_0}}\ka(\fp_{\la})\simeq \big(S_{\la_0}\otimes_{A_{\la_0}}\ka(\fp_{\la_0})\big)\otimes_{\ka(\fp_{\la_0})}\ka(\fp_{\la})
\end{split}
\end{equation}
and similarly $S\otimes_A\ka(\fp)\simeq\big(S_{\la_0}\otimes_{A_{\la_0}}\ka(\fp_{\la_0})\big)\otimes_{\ka(\fp_{\la_0})}\ka(\fp)$, we have $\dim(B_{\la}\otimes_{A_{\la}}\ka(\fp_{\la}))=\dim(S_{\la}\otimes_{A_{\la}}\ka(\fp_{\la}))=d$ for all $\la\geq\la_0$.

Take a normalization tuple $(b_1,\ldots,b_d)$ in $S_{\la_0}$, and we denote its images in $S_{\la}$, $S$, $B_{\la}$, and $B$ by the same symbol. 
One deduces by \eqref{eqn:simeq} that the map $\ka(\fp_{\la})[t_1,\ldots,t_d]\to S_{\la}\otimes_{A_{\la}}\ka(\fp_{\la})$ determined by the tuple $(b_1,\ldots,b_d)$ is the base change by the field extension $\ka(\fp_{\la_0})\injto\ka(\fp_{\la})$ of the finite injective map $\ka(\fp_{\la_0})[t_1,\ldots,t_d]\injto S_{\la_0}\otimes_{A_{\la_0}}\ka(\fp_{\la_0})$, which is determined similarly by $(b_1,\ldots,b_d)$.
Hence $\ka(\fp_{\la})[t_1,\ldots,t_d]\to S_{\la}\otimes_{A_{\la}}\ka(\fp_{\la})$ is finite injective, and thus $(b_1,\ldots,b_d)$ is a normalization tuple in $S_{\la}$ and $B_{\la}$ for any $\la\geq\la_0$; in a similar vein, it is a normalization tuple also in $S$ and $B$.

Now we set $b=b_1$, and denote its image in $B_{\la}$ by $b_{\la}$.
Replacing $\la_0$ by a sufficiently larger one if necessary, one can choose $a\in I_{\la_0}$ whose image in $I$ is non-zero.
We already know that the cohomology class $\sigma_{a,b_{\la}}$ is non-zero in $\HH^1(\Sph B_{\la},\cO)$. 
Now the projective limit of $\{\Sph B_{\la}\}_{\la\geq\la_0}$ in the category of locally ringed spaces is isomorphic to $\Sph B$ (\cite[\tagref{01YW}]{stacks-project}); in particular, the underlying topological space of $X=\Sph B$ coincides with the projective limit of the underlying topological spaces of $X_{\la}=\Sph B_{\la}$ (cf.\ \cite[Chap.~0, Prop.~4.1.10]{fujiwara_kato_2018}). Since $\cO_X\simeq \varinjlim_{\la\geq \la_0}p^{-1}_{\la}\cO_{X_{\la}}$, where $p_{\la}\colon X\rightarrow X_{\la}$ is the projection, we have 
\begin{equation}\label{eqn:isom_coh}
\HH^1(\Sph B,\cO)\simeq\varinjlim_{\la\geq\la_0}\HH^1(\Sph B_{\la},\cO),
\end{equation}
(cf.\ \cite[Chap.~0,Prop.~3.1.16]{fujiwara_kato_2018}).
Since $\sigma_{a,b_{\la}}$ forms a compatible system of elements in the right-hand colimit of \eqref{eqn:isom_coh}, we deduce that $\sigma_{a,b}$ in the left-hand cohomology is non-zero in $\HH^1(\Sph B,\cO)$.
\end{proof}

\begin{proof}[Proof of Corollary {\rm \ref{cor:general}}]
Set $A'=A/P$, and $B'=B/PB$.
Then $(A',IA')$ is a henselian pair with $A'$ an integral domain of characteristic zero, $IA'\neq 0$, and $(B',IB')$ is an $A'$-flat hfp algebra. Moreover, $\fp'=\fp A'$ is prime ideal of $A'$ such that $\dim(B'\otimes_{A'}\ka(\fp'))>0$.

Then, as we have seen in the proof of Theorem \ref{thm:main1}, there exists $a'\in IA'$ and $b'\in B'$ such that $\sigma_{a',b'}$ gives a non-zero cohomology class in $\HH^1(\Sph B',\cO)$.
Take a lift $b\in B$ of $b'$ and $a\in I$ of $a'$.
Then $\sigma_{a,b}$ is mapped to $\sigma_{a',b'}$ by
$$
\HH^1(\Sph B,\cO)\longto\HH^1(\Sph B',\cO).
$$
Hence we have $\sigma_{a,b}\neq 0$, which proves the corollary.
\end{proof}

\section{Examples}\label{sec:exa}

\begin{exa}[The flatness hypothesis]\label{exa:nonflat}
The flatness hypothesis in Theorem~\ref{thm:main1}
cannot be omitted.
Let $R$ be a complete DVR of characteristic zero, with
uniformizer $\pi$ and residue field $k$, and set
\[
  A=R,\quad I=\pi R,\quad B=k[x].
\]
The $R$-algebra structure on $B$ is induced by $R\to k$.
Since $B\simeq R\{x\}/\pi R\{x\}$, the algebra $B$ is of henselian finite presentation over $R$.
Moreover, $IB=0$, so $(B,IB)$ is a henselian pair.
At the closed point $\fp=(\pi)$, we have $B\otimes_R\ka(\fp)\simeq k[x]$, hence 
$$  
\dim(B\otimes_R\ka(\fp))=1.
$$
However, $B$ is not $R$-flat, since $\pi B=0$ and $B\ne0$.
Finally,
\[
  X=\Sph(B,IB)=\Spec k[x]
\]
as locally ringed spaces, and hence
\[
  \HH^q(X,\cO_X)=0\quad(q>0).
\]
\end{exa}

\begin{exa}[Zero-dimensional fibers]\label{exa:zero-dimensional}
The existence of a positive-dimensional fiber in Theorem~\ref{thm:main1} cannot be omitted either.
Let $R$ be a complete DVR of characteristic zero, with
uniformizer $\pi$, and take
\[
  A=B=R,\quad I=\pi R,
\]
with the identity map $A\to B$.
Then $B$ is $A$-flat and of henselian finite presentation.
For every $\fp\in\Spec A$, we have
\[
  B\otimes_A\ka(\fp)\simeq\ka(\fp),
\]
so every fiber is nonempty and zero-dimensional.
On the other hand, the underlying topological space of
$X=\Sph(R,\pi R)$ consists of a single point.
The global sections functor on abelian sheaves on $X$
is therefore exact, and consequently
\[
  \HH^q(X,\cO_X)=0\quad(q>0).
\]
\end{exa}

\begin{exa}[The finite presentation hypothesis]
\label{exa:power-series-no-finiteness}
The henselian finite presentation hypothesis in
Theorem~\ref{thm:main1} cannot be omitted, even when the total
ring is a regular Noetherian complete local domain.
Let $R$ be a complete DVR of characteristic zero, with uniformizer
$\pi$ and residue field $k$, and set
\[
  A=R,\quad I=\pi R,\quad B=R\dbl x\dbr.
\]
Then $B$ is a regular Noetherian complete local domain.
It is $\pi$-torsion free, hence flat, over the DVR $R$.
Moreover, $B$ is $\pi$-adically complete, since
\[
  B/\pi^nB\simeq(R/\pi^nR)\dbl x\dbr,
  \quad
  B\xrightarrow{\sim}\varprojlim_{n\geq1}B/\pi^nB,
\]
where the second isomorphism follows coefficientwise from the
completeness of $R$.
Thus $(B,\pi B)$ is a henselian pair.
At the closed point $\fp=(\pi)$ of $\Spec R$, the fiber is
\[
  B\otimes_R\ka(\fp)\simeq k\dbl x\dbr,
  \quad
  \dim(B\otimes_R\ka(\fp))=1.
\]

Nevertheless, for $X=\Sph(B,\pi B)$, one has
\[
  \HH^q(X,\cO_X)=0\quad(q>0).
\]
Indeed, the underlying topological space of $X$ is
$\Spec k\dbl x\dbr$.
If $s$ is its closed point, the only open neighborhood of $s$
is $X$ itself. Consequently, for every sheaf $\cF$ of abelian groups on $X$,
\[
  \Gamma(X,\cF)\simeq\cF_s.
\]
Since taking a stalk is exact, the global sections functor is
exact. In fact, all positive-degree cohomology groups of every
abelian sheaf on $X$ vanish.
\end{exa}

\begin{rem}\label{rem:not_ft}
Note that, in the situation as above, $B$ is not even of henselian finite type over $R$.
Otherwise, its reduction $B/\pi B=k\dbl x\dbr$ would be a finite type $k$-algebra, which is absurd (for, e.g., $k\dbl x\dbr$ is not Jacobson).
\end{rem}

\begin{exa}[A reduced non-flat example with injective structure map]
\label{exa:reduced-nonflat-pi-x}
The flatness hypothesis in Theorem~\ref{thm:main1} cannot be
replaced by injectivity of the structure map, even when the
total ring is reduced and the extended ideal of definition is
not nilpotent.
Let $R$ be a complete DVR of characteristic zero, with uniformizer
$\pi$ and residue field $k$, and set
\[
  A=R,\quad I=\pi R,\quad B=R[x]/(\pi x).
\]
We use $x$ also for its image in $B$.
There is a natural isomorphism
\[
  B\xrightarrow{\sim}R\times_k k[x],
  \quad
  f(x)\longmapsto\bigl(f(0),\overline{f}(x)\bigr),
\]
where the fiber product is taken with respect to the residue map
$R\to k$ and evaluation at $x=0$ on $k[x]$.
Under this isomorphism, $\pi^nB$ corresponds to
$\pi^nR\times\{0\}$ for every $n\geq1$.
Hence
\[
  B/\pi^nB\simeq(R/\pi^nR)\times_k k[x],
  \quad
  B\xrightarrow{\sim}\varprojlim_{n\geq1}B/\pi^nB.
\]
Thus $B$ is already $\pi$-adically complete, and hence $(B,\pi B)$ is henselian.
Since henselization commutes with passage to a quotient (Lemma \ref{lem:henselianProperties} (1)), we obtain
\[
  B\simeq R\{x\}/(\pi x).
\]
In particular, $B$ is of henselian finite presentation over $R$;
it is also finitely presented as an ordinary $R$-algebra.

Evaluation at $x=0$ gives a retraction $B\to R$, so $R\to B$ is injective. Since $B\simeq R\times_k k[x]$ embeds into the product of reduced rings $R\times k[x]$, $B$ is reduced.
Also $\pi^n\neq0$ in $B$ for every $n\geq1$, so $\pi B$ is
not nilpotent.
On the other hand, $x\neq0$ and $\pi x=0$ in $B$, whence $B$
is not $R$-flat.
The closed fiber is $B/\pi B\simeq k[x]$, and is therefore one-dimensional.

We show that $X=\Sph(B,\pi B)$ nevertheless satisfies
\[
  \HH^q(X,\cO_X)=0\quad(q>0).
\]
Identify $|X|$ with $\mathbb A^1_k$, and let
$i:\{0\}\hookrightarrow |X|$ be the inclusion of the origin.
Give $R$ its $B$-module structure through evaluation at $x=0$.
There is then an exact sequence of $B$-modules
\[
  0\longrightarrow R
   \xrightarrow{\ r\mapsto\pi r\ }B
   \longrightarrow k[x]\longrightarrow0.
\]
The first arrow is $B$-linear because $\pi x=0$.
By Proposition~\ref{prop:background2}~(1), this induces an exact
sequence of $\cO_X$-modules.
To identify its outer terms, apply
Proposition~\ref{prop:background2}~(2) to the finite quotient maps
$B\to R=B/(x)$ and $B\to k[x]=B/\pi B$.
The henselian spectrum $\Sph(R,\pi R)$ has a single point and
structure ring $R$, whereas
$\Sph(k[x],0)=\Spec k[x]$ as locally ringed spaces.
It follows that the resulting sequence is
\[
  0\longrightarrow i_*R
   \longrightarrow\cO_X
   \longrightarrow\cO_{\mathbb A^1_k}
   \longrightarrow0,
\]
where $i_*R$ is the skyscraper sheaf with value $R$ at the origin,
and $\cO_{\mathbb A^1_k}$ is viewed as a sheaf on the same
underlying topological space $|X|$.
Since the sheaf $i_*R$ is flasque, and the higher cohomology of
$\cO_{\mathbb A^1_k}$ vanishes, the cohomology long exact sequence gives the claimed vanishing.
\end{exa}

\section{Non-vanishing for henselian affinoids}\label{sec:affinoids}
In this section, we will prove Theorem \ref{thm:general-henselian-affinoid-nonvanishing}, which asserts the non-vanishing of the structure-sheaf cohomology for {\em henselian affinoids} by applying the non-vanishing results obtained so far on Henselian schemes. For basic information on Henselian rigid geometry, see \cite{kato2017henselianrigid}.

Let $V$ be a valuation ring of height one, and let
$a\in\fm_V\setminus\{0\}$ be such that $(V,aV)$ is henselian.
Write $K=\operatorname{Frac}(V)=V[1/a]$ and $k=V/\fm_V$.
Henselian affinoid algebras and their associated rigid spaces
are understood in the sense of
\cite[\S\S 3.1--3.2]{kato2017henselianrigid}.
For a henselian rigid space $\mathcal X$, cohomology means
cohomology on its Zariski--Riemann space
$\langle\mathcal X\rangle$ with the \emph{rigid} structure sheaf
$\cO_{\mathcal X}$, rather than the integral structure sheaf.

\begin{prop}[Persistence under passage to the associated rigid spaces]
\label{prop:rigidification-h1-injective}
Let $B$ be a $V$-flat henselian finite type $V$-algebra, and set
\[
  X=\Sph(B,aB),\quad \mathcal X=X^{\mathrm{rig}}.
\]
For the specialization map $s:\langle\mathcal X\rangle\to X$,
there is a canonical isomorphism
\begin{equation}\label{eq:rigidification-direct-image}
  s_*\cO_{\mathcal X}\simeq\cO_X[1/a].
\end{equation}
Consequently, the natural map
\begin{equation}\label{eq:rigidification-h1-injective}
  \HH^1(X,\cO_X)\longrightarrow
  \HH^1(\langle\mathcal X\rangle,\cO_{\mathcal X})
\end{equation}
is injective. Both groups are naturally $B[1/a]$-modules,
and this map is $B[1/a]$-linear.
\end{prop}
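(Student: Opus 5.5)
Our plan has three parts: identify $s_*\cO_{\mathcal X}$ on principal opens, show that $\cO_X\to\cO_X[1/a]$ is injective on $\HH^1$, and then compare $\HH^1(X,-)$ with $\HH^1(\langle\mathcal X\rangle,-)$ through the Leray spectral sequence for $s$.

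\textbf{Step 1: the isomorphism \eqref{eq:rigidification-direct-image}.} Both sides are sheaves on $X$, so it suffices to give compatible isomorphisms on principal opens $D(g)\cap V(aB)$. By the construction of $\langle\mathcal X\rangle$ as the projective limit of admissible blow-ups of $X$ in \cite[\S\S 3.1--3.2]{kato2017henselianrigid}, we have
\[
s^{-1}\bigl(D(g)\cap V(aB)\bigr)=\langle\mathcal U\rangle,\quad \mathcal U=\Sph\bigl((B_g)^h,a(B_g)^h\bigr)^{\mathrm{rig}}.
\]
The rigid structure sheaf has global sections $\Gamma(\langle\mathcal U\rangle,\cO_{\mathcal X})=(B_g)^h[1/a]$. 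This is the henselian analogue of the statement that the affinoid algebra of $\mathcal U$ is its integral model with $a$ inverted, and we cite it from \cite{kato2017henselianrigid}.

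On the other side, $\cO_X[1/a]=\varinjlim(\cO_X\xrightarrow{a}\cO_X\xrightarrow{a}\cdots)$ is a filtered colimit of sheaves. Its sections on a quasi-compact open such as $D(g)\cap V(aB)$ are therefore the colimit of the sections, namely $(B_g)^h[1/a]$. Here we use that $|X|=\Spec(B/aB)$ is spectral with a basis of quasi-compact principal opens.

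The two identifications are compatible with restriction maps, which gives \eqref{eq:rigidification-direct-image}.

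\textbf{Step 2: injectivity $\HH^1(X,\cO_X)\to\HH^1(X,\cO_X[1/a])$.} Since $|X|$ is quasi-compact and quasi-separated (spectral), cohomology commutes with filtered colimits of sheaves. Hence
\[
\HH^1(X,\cO_X[1/a])=\varinjlim\bigl(\HH^1(X,\cO_X)\xrightarrow{a}\HH^1(X,\cO_X)\xrightarrow{a}\cdots\bigr)=\HH^1(X,\cO_X)[1/a].
\]
The map from $\HH^1(X,\cO_X)$ is injective exactly when multiplication by $a$ is injective on $\HH^1$. We know $a$ is a non-zero-divisor on $B$ because $B$ is $V$-flat and $a\neq0$, so Lemma~\ref{lem:torsion} applies. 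In fact Proposition~\ref{prop:dvr-cohomology}, applied with $R=V$ and $\pi=a$, shows that multiplication by $a$ is an isomorphism. So $\HH^1(X,\cO_X)$ is already a $B[1/a]$-module and this map is an isomorphism.

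\textbf{Step 3: passage to $\langle\mathcal X\rangle$.} The low-degree exact sequence of the Leray spectral sequence for $s$ gives an injection $\HH^1(X,s_*\cO_{\mathcal X})\hookrightarrow\HH^1(\langle\mathcal X\rangle,\cO_{\mathcal X})$ (\cite[\tagref{01F2}]{stacks-project}). Composing this with Steps~1 and~2 yields the injectivity of \eqref{eq:rigidification-h1-injective}.

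For linearity: $\cO_{\mathcal X}$ is a sheaf of $B[1/a]$-algebras, so $\HH^1(\langle\mathcal X\rangle,\cO_{\mathcal X})$ is a $B[1/a]$-module. Every map in the construction is a morphism of sheaves of $B$-modules, so the composite is $B$-linear. A $B$-linear map between $B[1/a]$-modules is automatically $B[1/a]$-linear.

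\textbf{Main obstacle.} The hardest point is Step~1, specifically the identification $\Gamma(\langle\mathcal U\rangle,\cO_{\mathcal X})=(B_g)^h[1/a]$ and the description of $s^{-1}$ of principal opens. This has to be extracted from the foundations of henselian rigid geometry in \cite{kato2017henselianrigid}, where it should follow from the vanishing $\varinjlim$ of the $a$-torsion in the cohomology of admissible blow-ups. If that route fails, the fallback is to define the comparison map directly and check that it is an isomorphism on stalks.
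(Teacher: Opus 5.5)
Your proposal is correct and follows the paper's proof essentially step for step. You identify $s_*\cO_{\mathcal X}\simeq\cO_X[1/a]$ on principal opens, then use that cohomology on the spectral space $|X|$ commutes with the colimit defining $\cO_X[1/a]$, together with Proposition~\ref{prop:dvr-cohomology}, to get $\HH^1(X,\cO_X)\simeq\HH^1(X,\cO_X[1/a])$, and you finish with the low-degree Leray injection. The only difference is that you cite $\Gamma(s^{-1}U,\cO_{\mathcal X})=(B_g)^h[1/a]$ as a black box, whereas the paper unpacks it: it writes $\cO_{\mathcal X}=\cO^{\mathrm{int}}_{\mathcal X}[1/a]$ using $a$-torsion-freeness of the $V$-flat blow-up models, computes sections over $\langle\mathcal X\rangle$ as a colimit over admissible blow-ups via \cite[\tagref{0A37}]{stacks-project}, and applies \cite[Proposition~3.3]{kato2017henselianrigid} to each $\Gamma(h^{-1}U,\cO_Y)[1/a]$.
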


\begin{proof}
Put $Z=\langle\mathcal X\rangle$. By
\cite[\S 3.2]{kato2017henselianrigid}, $Z$ is the inverse limit
of the admissible blow-ups $h:Y\to X$, and its integral structure
sheaf is
\[
  \cO^{\mathrm{int}}_{\mathcal X}
  \simeq\varinjlim_{Y\to X}p_Y^{-1}\cO_Y,
\]
where $p_Y:Z\to Y$ denotes the projection.
The blow-up charts of a $V$-flat model are $a$-torsion free,
and henselization preserves flatness. By
\cite[\tagref{0539}]{stacks-project}, these models are
$V$-flat, and $a$ is a non-zero-divisor in
$\cO^{\mathrm{int}}_{\mathcal X}$. The definition of the rigid
structure sheaf therefore gives
\[
  \cO_{\mathcal X}=\cO^{\mathrm{int}}_{\mathcal X}[1/a].
\]

Let $U=D(g)\subset X$ be a distinguished open, and write
$C=(B_g)^h_{aB_g}$, so that $U=\Sph(C,aC)$.
The spaces $Y$ are spectral and the transition maps are
spectral: their underlying spaces are those of their special
fibres over $k$, which are quasi-compact and quasi-separated
schemes of finite type over $k$.
The degree-zero case of
\cite[\tagref{0A37}]{stacks-project}, together with
\cite[\tagref{01FF}]{stacks-project} for localization, yields
\begin{equation}\label{eq:rigidification-local-sections}
  \Gamma(s^{-1}U,\cO_{\mathcal X})
  \simeq\varinjlim_{h\colon Y\to X}
       \Gamma(h^{-1}U,\cO_Y)[1/a]
  \simeq C[1/a].
\end{equation}
The last isomorphism follows from
\cite[Proposition~3.3]{kato2017henselianrigid}, applied to
$h^{-1}U\to U$, which is an admissible blow-up.
All these isomorphisms are canonical and respect restriction.
On the other hand,
$\Gamma(U,\cO_X[1/a])=C[1/a]$ by
\cite[\tagref{01FF}]{stacks-project} in degree zero.
This proves \eqref{eq:rigidification-direct-image}.

Multiplication by $a$ is an isomorphism on $\HH^1(X,\cO_X)$
by Proposition~\ref{prop:dvr-cohomology}. This follows from
the exact sequence
\[
  0\longrightarrow\cO_X\xrightarrow{\ a\ }\cO_X
   \longrightarrow\cO_{\Spec(B/aB)}\longrightarrow0
\]
and affine scheme cohomology. Since $X$ is spectral,
\cite[\tagref{01FF}]{stacks-project} also gives
\[
  \HH^1(X,\cO_X[1/a])
  \simeq\HH^1(X,\cO_X)[1/a]
  \simeq\HH^1(X,\cO_X).
\]
Finally, the low-degree Leray sequence for the morphism of
ringed spaces $s$, using the rigid structure sheaf on its source,
gives an injection
\[
  \HH^1(X,s_*\cO_{\mathcal X})
  \lhook\joinrel\longrightarrow
  \HH^1(Z,\cO_{\mathcal X});
\]
see \cite[\tagref{01F2}]{stacks-project}.
Combining the preceding identifications proves
\eqref{eq:rigidification-h1-injective}, with its asserted linearity.
\end{proof}

\begin{proof}[Proof of Thorem {\rm \ref{thm:general-henselian-affinoid-nonvanishing}}]
By \cite[\S 3.1]{kato2017henselianrigid}, $\mathcal A$ admits
a $V$-flat model $B$ of henselian finite presentation with
\[
  \mathcal A=B[1/a],\quad
  \mathcal X=\bigl(\Sph(B,aB)\bigr)^{\mathrm{rig}}.
\]
More explicitly, one lifts a finite set of generators of the
defining ideal of $\mathcal A$ to $V\{T_1,\ldots,T_n\}$
and replaces the lifted ideal by its $a$-saturation, which
is still finitely generated.

We claim that $\dim(B/\fm_VB)>0$.
Since $V$ has height one,
\[
  \sqrt{aV}=\fm_V,
  \quad
  \sqrt{aV[T_1,\ldots,T_n]}=\fm_VV[T_1,\ldots,T_n].
\]
Henselization depends only on the radical of the ideal
\cite[\tagref{0F0L}]{stacks-project}.
Thus $(V,\fm_V)$ and $(B,\fm_VB)$ are henselian pairs,
and $B$ is hfp over $(V,\fm_V)$ as well.
The algebra $B/\fm_VB$ is a finite type $k$-algebra, and
it is nonzero because $B\ne0$ and
$\fm_VB\subseteq\operatorname{Jac}(B)$.
If its dimension were zero, it would be finite over $k$.
Proposition~\ref{prop:background1}\,\textup{(1)}, applied to
$(V,\fm_V)$, would then imply that $B$ is finite over $V$.
Hence $\mathcal A=B[1/a]$ would be finite over $K$, contrary
to $\dim\mathcal A>0$. This proves the claim.

Theorem~\ref{thm:main1}, applied to $(V,aV)$, $B$, and
$\mathfrak p=\fm_V$, now gives
\[
  \HH^1(\Sph(B,aB),\cO)\ne0.
\]
The result follows from
Proposition~\ref{prop:rigidification-h1-injective}.
\end{proof}

\begin{rem}[Dimension zero]
\label{rem:zero-dimensional-henselian-affinoids}
Let $\mathcal A\ne0$ be a zero-dimensional
henselian affinoid algebra. Henselian Noether normalization
\cite[\S 3.1]{kato2017henselianrigid} implies that $\mathcal A$
is finite over $K$. Its reduction is a finite product of
finite field extensions of $K$. Since $V$ is henselian,
its valuation extends uniquely to each such extension.
The valuation-theoretic description of the Zariski--Riemann
space in \cite[\S 3.2]{kato2017henselianrigid} consequently
shows that $\langle\mathcal X\rangle$ is finite and discrete;
nilpotents do not affect this space. Hence
\[
  \HH^q(\langle\mathcal X\rangle,\cO_{\mathcal X})=0
  \quad(q>0).
\]
Thus, in characteristic zero, for a nonempty finite type
henselian affinoid one has
\[
  \HH^1(\langle\mathcal X\rangle,\cO_{\mathcal X})\ne0
  \quad\Longleftrightarrow\quad \dim\mathcal A>0.
\]
\end{rem}

\section*{AI disclosure}
Our project began with an attempt to understand \cite{greco1981quasi}. We used ChatGPT and Fable 5 to identify potentially problematic points in the arguments of \cite{greco1981quasi} and to search for possible counterexamples. This process led us to a very simple counterexample. Motivated by this example, we were led to suspect that, more generally, degree-one cohomology should remain nonzero under suitable hypotheses. We then used ChatGPT 5 Astra to develop this idea into the results presented in this paper.

Thus, this paper was written with substantial assistance from AI tools. At the same time, the authors carefully checked, reorganized, and simplified the AI-assisted arguments at various stages. In particular, the formula corresponding to \eqref{eq:intro-class}, which plays an important role in the argument, was first identified by ChatGPT 5 Astra.

\section*{Acknowledgments}
This project was carried out as part of the research activities at the ZEN Mathematics Center (ZMC), the Institute for Pure Mathematics at ZEN University. The first author was partially supported by grants JSPS KAKENHI \#22H01112, 23H01064, 25H00587.

\frenchspacing
\bibliographystyle{plain}
\bibliography{henselian}
\end{document}